\begin{document}
\newtheorem{theorem}{Theorem}[section]
\newtheorem{lemma}[theorem]{Lemma}
\newtheorem{claim}[theorem]{Claim}
\newtheorem{definition}[theorem]{Definition}
\newtheorem{conjecture}[theorem]{Conjecture}
\newtheorem{proposition}[theorem]{Proposition}
\newtheorem{algorithm}[theorem]{Algorithm}
\newtheorem{corollary}[theorem]{Corollary}
\newtheorem{observation}[theorem]{Observation}
\newtheorem{problem}[theorem]{Open Problem}
\newcommand{\noin}{\noindent}
\newcommand{\ind}{\indent}
\newcommand{\al}{\alpha}
\newcommand{\om}{\omega}
\newcommand{\pp}{\mathcal P}
\newcommand{\ppp}{\mathfrak P}
\newcommand{\R}{{\mathbb R}}
\newcommand{\N}{{\mathbb N}}
\newcommand\eps{\varepsilon}
\newcommand{\E}{\mathbb E}
\newcommand{\Bin}{\textrm{Bin}}
\newcommand{\Prob}{\mathbb{P}}
\newcommand{\pl}{\textrm{C}}
\newcommand{\dang}{\textrm{dang}}
\renewcommand{\labelenumi}{(\roman{enumi})}
\newcommand{\bc}{\bar c}
\newcommand{\G}{{\mathcal{G}}}
\newcommand{\Po}{{\mathcal{P}}}

\newcommand{\expect}[1]{\E \left [ #1 \right ]}
\newcommand{\floor}[1]{\left \lfloor #1 \right \rfloor}
\newcommand{\ceil}[1]{\left \lceil #1 \right \rceil}
\newcommand{\of}[1]{\left( #1 \right)}
\newcommand{\set}[1]{\left\{ #1 \right\}}
\newcommand{\angs}[1]{\left\langle #1 \right\rangle}
\newcommand{\sqbs}[1]{\left[ #1 \right]}
\newcommand{\sm}{\setminus}
\newcommand{\bfrac}[2]{\of{\frac{#1}{#2}}}
\renewcommand{\k}{\kappa}
\renewcommand{\l}{\ell}
\renewcommand{\b}{\beta}
\newcommand{\blue}[1]{{\color{blue} #1}}

\tikzset{
    position label/.style={
       below = 3pt,
       text height = 1.5ex,
       text depth = 1ex
    },
   brace/.style={
     decoration={brace},
     decorate
   }
}

\tikzset{
    position label/.style={
       below = 3pt,
       text height = 1.5ex,
       text depth = 1ex
    },
   bracem/.style={
     decoration={brace,mirror},
     decorate
   }
}

\title{The Total Acquisition Number of Random Geometric Graphs}

\author{Ewa Infeld}
\address{Department of Mathematics, Ryerson University, Toronto, ON, Canada, M5B 2K3}
\email{\texttt{evainfeld@ryerson.ca}}

\author{Dieter Mitsche}
\address{Universit\'{e} de Nice Sophia-Antipolis, Laboratoire J-A Dieudonn\'{e}, Parc Valrose, 06108 Nice cedex 02}
\email{\texttt{dmitsche@unice.fr}}

\author{Pawe\l{} Pra\l{}at}
\address{Department of Mathematics, Ryerson University, Toronto, ON, Canada
\and
The Fields Institute for Research in Mathematical Sciences, Toronto, ON, Canada}
\thanks{The third author is supported in part by NSERC and Ryerson University}
\email{\texttt{pralat@ryerson.ca}}

\begin{abstract}
Let $G$ be a graph in which each vertex initially has weight 1. In each step, the weight from a vertex $u$ to a neighbouring vertex $v$ can be moved, provided that the weight on $v$ is at least as large as the weight on $u$. The total acquisition number of $G$, denoted by $a_t(G)$, is the minimum cardinality of the set of vertices with positive weight at the end of the process. In this paper, we investigate random geometric graphs $\G(n,r)$ with $n$ vertices distributed u.a.r.\ in $[0,\sqrt{n}]^2$ and two vertices being adjacent if and only if their distance is at most $r$. We show that asymptotically almost surely $a_t(\G(n,r)) = \Theta( n / (r \lg r)^2)$ for the whole range of $r=r_n \ge 1$ such that $r \lg r \le \sqrt{n}$. By monotonicity, asymptotically almost surely $a_t(\G(n,r)) = \Theta(n)$ if $r < 1$, and $a_t(\G(n,r)) = \Theta(1)$ if $r \lg r > \sqrt{n}$.
\end{abstract}

\maketitle

\section{Introduction}

Gossiping and broadcasting are two well studied problems involving information dissemination in a group of individuals connected by a communication network~\cite{HHL}. In the gossip problem, each member has a unique piece of information which she would like to pass to everyone else. In the broadcast problem, there is a single piece  of information (starting at one member) which must be passed to every other member of the network. These problems have received attention from mathematicians as well as computer scientists due to their applications in distributed computing~\cite{BGRV}. Gossip and broadcast are respectively known as ``all-to-all'' and ``one-to-all'' communication problems. In this paper, we consider the problem of acquisition, which is a type of ``all-to-one'' problem. Suppose each vertex of a graph begins with a weight of 1 (this can be thought of as the piece of information starting at that vertex). A \textbf{total acquisition move} is a transfer of all the weight from a vertex $u$ onto a neighbouring vertex $v$, provided that immediately prior to the move, the weight on $v$ is at least the weight on $u$. Suppose a number of total acquisition moves are made until no such moves remain. Such a maximal sequence of moves is referred to as an \textbf{acquisition protocol}  and the vertices which retain positive weight after an acquisition protocol is called a \textbf{residual set}. Note that any residual set is necessarily an independent set. Given a graph $G$, we are interested in the minimum possible size of a residual set and refer to this number as the \textbf{total acquisition number of $G$}, denoted $a_t(G)$.  The restriction to total acquisition moves can be motivated by the so-called ``smaller to larger'' rule in disjoint set data structures. For example, in the UNION-FIND data structure with linked lists, when taking a union, the smaller list should always be appended to the longer list. This heuristic improves the amortized performance over sequences of union operations.  \vskip 0.1 in

\noindent \textbf{Example:} The weight of a vertex can at most double at every total acquisition move, and so a vertex with degree $d$ can carry at most weight $2^d$. (We will later use this fact in Observation~\ref{obs:min_degree}.) An acquisition protocol for a cycle $C_{4k}$ (for some $k \in \N$) that leaves a residual set of every fourth vertex is the best we can do; see Figure~\ref{fig:path}. Therefore, $a_t(C_{4k})=k.$

\begin{figure}[ht!]\centering
\begin{tikzpicture}
\draw (-0.75,0) -- (7.75,0);
\foreach \x in {0, 1, 2, 3, 4, 5, 6, 7}{
\filldraw[white] (\x,0) circle (6pt);
\draw (\x,0) circle (6pt);
\draw (\x,0) node {1};
}
\end{tikzpicture}\vskip 0.25 in

\begin{tikzpicture}
\draw (-0.75,0) -- (7.75,0);
\foreach \x in {0, 1, 2, 3, 4, 5, 6, 7}{
\filldraw[white] (\x,0) circle (6pt);
\draw (\x,0) circle (6pt);}
\foreach \x in {1, 2, 5, 6}{
\draw (\x,0) node {2};
}
\foreach \x in {0, 3, 4, 7}{
\draw (\x,0) node {0};
}
\draw (0.5,-0.3) node {$\rightarrow$};
\draw (4.5,-0.3) node {$\rightarrow$};
\draw (2.5,-0.3) node {$\leftarrow$};
\draw (6.5,-0.3) node {$\leftarrow$};
\end{tikzpicture}
\vskip 0.15 in

\begin{tikzpicture}
\draw (-0.75,0) -- (7.75,0);
\foreach \x in {0, 1, 2, 3, 4, 5, 6, 7}{
\filldraw[white] (\x,0) circle (6pt);
\draw (\x,0) circle (6pt);}
\foreach \x in {2,6}{
\draw (\x,0) node {4};
}
\foreach \x in {0, 1,  3, 4, 5, 7}{
\draw (\x,0) node {0};
}
\draw (1.5,-0.3) node {$\rightarrow$};
\draw (5.5,-0.3) node {$\rightarrow$};
\end{tikzpicture}
\caption{The total aquisition moves for a fragment of a cycle $C_{4k}$ that leave a residual set of size $a_t(C_{4k})=k$.} \label{fig:path}
\end{figure}
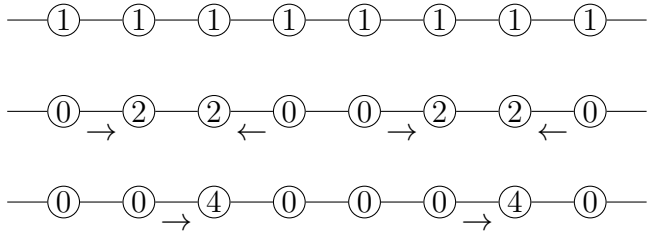

The parameter $a_t(G)$ was introduced by Lampert and Slater~\cite{LS} and subsequently studied in~\cite{SW, LPWWW}.  In~\cite{LS}, it was shown that $a_t(G)\le \floor{\frac{n+1}{3}}$ for any connected graph $G$ on $n$ vertices and that this bound is tight.  Slater and Wang~\cite{SW}, via a reduction to the three-dimension matching problem, showed that it is NP-complete to determine whether $a_t(G)=1$ for general graphs $G$. In LeSaulnier {\em et al.}~\cite{LPWWW}, various upper bounds on the acquisition number of trees were shown in terms of the diameter and the number of vertices, $n$. They also showed that $a_t(G) \le 32\log n\log\log n$ (here $\log n$ denotes the natural logarithm but throughout the paper we mostly use $\lg n$, the binary logarithm) for all graphs with diameter 2 and conjectured that the true bound is constant. For work on game variations of the parameter and variations where acquisition moves need not transfer the full weight of vertex, see~\cite{Wen, PWW, SW2}.

\medskip

Randomness often plays a part in the study of information dissemination problems, usually in the form of a random network or a randomized protocol, see {\em e.g.}~\cite{gos, FM, G95}. The total acquisition number of the \textbf{Erd\H{o}s-R\'{e}nyi-Gilbert random graph} $G(n,p)$ was recently studied in~\cite{BBDP}, where potential edges among $n$ vertices are added independently with probability $p$. In particular, LeSaulnier {\em et al.}~\cite{LPWWW} asked for the minimum value of $p=p_n$ such that $a_t(G(n,p)) = 1$ asymptotically almost surely (see below for a formal definition). In~\cite{BBDP} it was proved that $p = \frac{\lg n}{n} \approx 1.4427 \ \frac{\log n}{n}$ is a sharp threshold for this property. Moreover, it was also proved that almost all trees $T$ satisfy $a_t(T) = \Theta(n)$, confirming a conjecture of West. Another way randomness can come into the picture is when initial weights are generated at random. This direction, in particular the case where vertex weights are initially assigned according to independent Poisson distributions of intensity $1$, was recently considered in~\cite{GKKPZ}.

\medskip

In this note we consider the \textbf{random geometric graph} $\G(\mathcal{X}_n,r_n)$, where (i) $\mathcal{X}_n$ is a set of $n$ points located independently uniformly at random in $[0,\sqrt{n}]^2$, (ii) $(r_n)_{n \ge 1}$ is a sequence of positive real integers, and (iii) for $\mathcal{X} \subseteq \mathbb{R}^2$ and $r > 0$, the graph $\G(\mathcal{X}, r)$ is defined to have vertex set $\mathcal{X}$, with two vertices connected by an edge if and only if their spatial locations are at Euclidean distance at most $r$ from each other. As typical in random graph theory, we shall consider only asymptotic properties of $\G(\mathcal{X}_n,r_n)$ as $n\rightarrow \infty$. We will therefore write $r=r_n$, identifying vertices with their spatial locations and defining $\G(n,r)$ as the graph with vertex set $[n]=\{1,2,\dots, n\}$ corresponding to $n$ locations chosen independently uniformly at random in $[0,\sqrt{n}]^2$ and a pair of vertices within Euclidean distance $r$ appears as an edge. For more details see, for example, the monograph~\cite{pen}.

Finally, we say that an event in a probability space holds \textbf{asymptotically almost surely} (\textbf{a.a.s.}), if its probability tends to one as $n$ goes to infinity.

\medskip

We are going to show the following result. 

\begin{theorem}\label{thm:main}
Let $r=r_n$ be any positive real number. Then, a.a.s.\ $a_t \left( \G(n,r) \right) = \Theta( f_n )$, where
$$
f_n = 
\begin{cases}
n & \text{ if } r < 1, \\
\frac {n}{(r \lg r)^{2}} & \text{ if } r \ge 1 \text { and } r \lg r \le \sqrt{n},\\
1 & \text{ if } r \lg r > \sqrt{n}.
\end{cases}
$$
\end{theorem}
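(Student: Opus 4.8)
The plan is to establish matching upper and lower bounds on $a_t(\G(n,r))$ of order $n/(r\lg r)^2$ in the main regime $1 \le r$, $r\lg r \le \sqrt n$, and then handle the two boundary cases by monotonicity. The scale $r\lg r$ should be understood as the ``effective radius'' of a blob that a single residual vertex can drain: a vertex of degree $d$ can accumulate weight at most $2^d$, so to swallow a region containing roughly $w$ unit weights one needs degree at least $\lg w$; since the number of vertices within distance $\rho$ of a point is a.a.s.\ of order $\rho^2$ (when $\rho \le \sqrt n$), a residual vertex can realistically control a disk of radius about $\sqrt{d}$ while its degree is about $r^2$ inside distance $r$ — iterating this ``snowball'' over $\Theta(\lg r)$ rings of geometrically growing radius $r, 2r, 4r, \dots$ lets the weight double each ring, reaching weight $\Theta((r\lg r)^2)$ over a disk of radius $\Theta(r\lg r)$. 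Dividing the square into $\Theta(n/(r\lg r)^2)$ such disks and running this protocol in each yields the upper bound.

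\textbf{Upper bound.} First I would record the standard facts about $\G(n,r)$: a.a.s.\ every disk of radius $\rho$ with $1 \le \rho \le \sqrt n$ contains $\Theta(\rho^2)$ vertices, and more precisely one can tile $[0,\sqrt n]^2$ by subsquares of side $\Theta(r)$ so that every subsquare is nonempty and vertices in the same or adjacent subsquares are adjacent (connectivity at the grid scale). Partition the square into ``super-cells'' of side length $\ell := c\, r\lg r$ for a suitable small constant $c$ (so $\ell \le \sqrt n$); there are $\Theta(n/\ell^2) = \Theta(n/(r\lg r)^2)$ of them. Inside one super-cell I would run a protocol that concentrates essentially all of its $\Theta(\ell^2)$ weight onto one vertex. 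The mechanism: pick a central sub-cell, sweep weight inward ring by ring. In ring $i$ (an annulus of inner radius $\approx 2^{i} r$ around the center) the accumulated central weight is $W_i$; a central vertex adjacent to $\Theta(r^2)$ fresh unit-weight vertices can absorb them one at a time provided its current weight is $\ge 1$, so after absorbing the $\Theta(r^2)$ vertices of the innermost ring the center has weight $\Theta(r^2) \ge $ the weight of any single outer-ring representative, and we can then pull in pre-consolidated ``chunks'' from the next ring. Making this precise requires an inductive loading lemma: with $k = \Theta(\lg r)$ doubling steps the central weight reaches $\Theta(\ell^2)$, which is exactly enough to have dominated every intermediate transfer. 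I would phrase this as: in any connected subgraph on $m$ vertices in which every vertex has degree $\ge \lg m$ ``toward the center'' (formalized via the BFS layers), $a_t = 1$; the geometric structure guarantees the degree condition since $\lg(\ell^2) = \Theta(\lg r) \ll r^2$.

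\textbf{Lower bound.} For the lower bound I would argue that no residual vertex can be responsible for more than $O((r\lg r)^2)$ units of original weight, so the residual set has size $\Omega(n/(r\lg r)^2)$. Fix a residual vertex $v$ and let $S$ be the set of vertices whose unit of weight ends up (possibly after passing through others) on $v$; $|S|$ is exactly the final weight of $v$. Following the weight-flow tree rooted at $v$, a classical argument (used for paths/cycles and in \cite{LPWWW,BBDP}) shows that the final weight on $v$ is at most $2^{\deg(v)}$; more usefully, one can show every weight unit that reaches $v$ originates within graph-distance $O(\lg(\text{final weight}))$ of $v$. Combining: if $v$ ends with weight $w$, then $S$ lies within graph-distance $O(\lg w)$ of $v$, hence within Euclidean distance $O(r\lg w)$, hence $w = |S| = O((r \lg w)^2)$, which forces $w = O((r\lg r)^2)$ (using $r\le$ something polynomial in $n$, and that $\lg w \le O(\log n)$ so $\lg\lg w = O(\lg r)$ in the relevant range; one has to be a little careful when $r$ is close to constant, but there $r\lg r = \Theta(1)$ and the bound $a_t = \Theta(n)$ from the $r<1$ monotonicity case takes over). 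Since the sets $S$ over distinct residual vertices partition $[n]$, we get $a_t \ge n / \max_v w_v = \Omega(n/(r\lg r)^2)$.

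\textbf{Boundary cases and the main obstacle.} If $r < 1$ the graph has no edges a.a.s.\ (minimum inter-point distance is a.a.s.\ $\gg$ some power of $n^{-1}$, in particular $>r$ is false — actually the closest pair is at distance $\Theta(n^{-1/2}\cdot n^{-1/2})=\Theta(n^{-1})$ wait; more carefully a.a.s.\ there exist edges, but the relevant point is only that $a_t=\Theta(n)$ trivially since $a_t \le n$ always and $a_t \ge $ number of components $\ge n - o(n)$ when $r<1$). For $r\lg r > \sqrt n$, the graph is a.a.s.\ connected with diameter $O(1)$ and the upper bound construction (one super-cell covers everything) gives $a_t = O(1)$, while $a_t \ge 1$ trivially; monotonicity of $a_t$ in $r$ (adding edges cannot increase $a_t$ — a.a.s., after a standard coupling) bridges any gaps. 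The main obstacle I anticipate is the inductive loading lemma in the upper bound: one must carefully interleave the consolidation within each ring and the inward transfers so that the ``$v$ has at least as much weight as $u$'' constraint is never violated, i.e. organize the $\Theta(\lg r)$ doubling phases and the $\Theta(r^2)$-fold local aggregations in each phase into a single valid protocol; getting the bookkeeping right (especially handling the boundary annuli that are not full disks, and ensuring the per-ring degree really is $\ge \lg(\text{current weight})$ with the right constant) is where the real work lies.
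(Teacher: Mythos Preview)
Your lower-bound argument has a genuine gap in the sparse regime. The step ``$w = |S| = O((r\lg w)^2)$'' presupposes that a.a.s.\ every Euclidean ball of radius $\rho$ contains $O(\rho^2)$ vertices, uniformly over all centers and all relevant $\rho$. This fails when $r$ is bounded: for $r=O(1)$ there are a.a.s.\ balls of radius $O(r\lg r)=O(1)$ containing $\Theta(\log n/\log\log n)$ vertices, and indeed a single residual vertex \emph{can} acquire weight far exceeding any fixed power of $r$. The paper splits the lower bound into two regimes: for $r\ge c\sqrt{\lg n}/\lg\lg n$ your ball-count/Chernoff argument works (Theorem~\ref{thm:lower1}), but for constant $r$ up to that threshold a different idea is needed. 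The paper tessellates into squares of side $\Theta(r\lg r)$ and calls a square \emph{dangerous} if its center circle is occupied and it contains fewer than $1200(r\lg r)^2$ vertices; one then shows deterministically that any acquisition protocol must leave a residual vertex in every dangerous square (else a central vertex would have to push its weight out along a long path, forcing an intermediate vertex to accumulate more weight than the square contains), and that a.a.s.\ a positive fraction of squares are dangerous. Your sketch does not supply this second argument.

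Your upper-bound sketch is also off in the geometry. Rings of ``geometrically growing radius $r,2r,4r,\dots$'' cannot be traversed in single acquisition moves (an edge has length at most $r$), and with $\Theta(\lg r)$ such rings you would reach radius $r\cdot 2^{\Theta(\lg r)}$, not $r\lg r$. The correct picture is $\Theta(\lg r)$ strips of width $\Theta(r)$ each (radii growing linearly), and the whole difficulty is designing a protocol that respects the ``receiver $\ge$ sender'' constraint at every one of the $\Theta((r\lg r)^2)$ moves. The paper's mechanism is quite different from a ring-by-ring sweep: it builds a specific rooted tree $\hat T_d$ (root has children that are roots of $\hat T_0,\dots,\hat T_{d-1}$) on $2^d$ vertices with $a_t(\hat T_d)=1$, trims it to exactly the right size, and then \emph{embeds} it in the vertices of a triangle of side $\Theta(r\lg r)$ via a recursive splitting of the triangle into trapezoidal regions. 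The key combinatorial fact making the embedding possible is that $\hat T_d$ has at most $\binom{d}{\ell}$ vertices at depth $\ell$, which for $\ell\le \lg r/10$ is at most $r^{0.7}\ll r^2$, so each strip of width $\Theta(r)$ has enough room. Your ``inductive loading lemma'' would need to rediscover something of this shape; as written, the proposal does not contain the idea.
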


\section{Lower Bound}

Let us start with the following simple but useful observation. 

\begin{observation}\label{obs:min_degree}
Let $G=(V,E)$ be a graph. If $v \in V$ is to acquire weight $w$ (at any time during the process of moving weights around), then $deg(v)$, the degree of $v$, is at least $\lg w$. Moreover, all vertices that contributed to the weight of $w$ (at this point of the process) are at graph distance at most $\lg w$ from $v$. 
\end{observation}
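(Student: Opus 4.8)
The plan is to derive both halves of the observation from one structural fact: a vertex that has transferred all of its weight to a neighbour is thereafter inert. Indeed, after such a move the vertex $x$ has weight $0$; it cannot donate from weight $0$, and it cannot receive weight, since a move onto $x$ requires the donor to have weight at most $\text{wt}(x)=0$. Hence \emph{each vertex acts as the donor of an acquisition move at most once in the entire protocol}, and moreover the weight of any vertex is non-decreasing up to the unique moment (if ever) at which it empties itself. In particular, until $v$ reaches weight $w$ its weight only ever increases, through donations it receives, and the donors of those donations are pairwise distinct neighbours of $v$.

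For the degree bound I would track the weight at $v$ through these donations. If $1=a_0<a_1<\dots<a_m=w$ are the successive weights of $v$ as it accumulates weight $w$, then $a_i\le 2a_{i-1}$ for each $i$, because the incoming weight never exceeds $v$'s current weight (the precondition of a total acquisition move). Therefore $w=a_m\le 2^m$, so $m\ge\lg w$; and since the $m$ donors are distinct neighbours of $v$, we get $\deg(v)\ge m\ge\lg w$.

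For the distance bound I would run an induction over the moves of the protocol, maintaining for every positive-weight vertex $y$ the invariant $D(y):=\max\{\operatorname{dist}_G(y,s): s \text{ has contributed to the current weight of } y\}\le\lg(\text{current weight of }y)$. At the start every vertex has weight $1$ and $D=0$, so the invariant holds. When a move sends weight $w_2$ from $x$ onto $y$, where $y$ has current weight $w_1\ge w_2$, the new weight of $y$ is $w=w_1+w_2$ and $D(y)$ becomes $\max(D(y),\,1+D(x))\le\max(\lg w_1,\,1+\lg w_2)$; since $w_1\le w$ and $2w_2\le w_1+w_2=w$, both terms are at most $\lg w$, so the invariant is preserved. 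Reading it off at the moment $v$ holds weight $w$ gives exactly the second claim.

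There is no serious obstacle here --- the statement is elementary --- but the point that needs care is the opening one: establishing that a vertex which has given away all its weight can never again send or receive weight. This single fact is what simultaneously forces $v$'s weight to be monotone before it reaches $w$ and forces the donors feeding $v$ to be distinct, and both of those are what make the short counting argument for $\deg(v)\ge\lg w$ valid rather than merely plausible.
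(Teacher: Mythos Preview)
Your proof is correct and follows essentially the same doubling argument as the paper: each incoming move at most doubles the recipient's weight, and this yields both the degree bound and the distance bound. The paper phrases the distance claim by tracing the path $(u_0,u_1,\dots,u_k=v)$ along which a contributor's unit travels and observing $u_i$ carries at least $2^i$ after receiving, whereas you maintain the equivalent invariant $D(y)\le\lg(\text{wt}(y))$ over all moves; your explicit remark that an emptied vertex is inert (hence donors to $v$ are distinct and $v$'s weight is monotone until it empties) is a point the paper leaves implicit.
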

\begin{proof}
Note that during each total acquisition move, when weight is shifted onto $v$ from some neighbouring vertex, the weight of $v$ can at most double. Thus, $v$ can only ever acquire $1 + 2 + \ldots + 2^{\deg(v)-1}$, in addition to the $1$ it starts with, and so $v$ can acquire at most weight $2^{\deg(v)}$. To see the second part, suppose that some vertex $u_0$ moved the initial weight of $1$ it started with to $v$ through the path $(u_0, u_1, \ldots, u_{k-1}, u_k=v)$. It is easy to see that after $u_{i-1}$ transfers its weight onto $u_i$, $u_i$ has weight at least $2^i$. So if $u_0$ contributed to the weight of $w$, $u_0$ must be at graph distance at most $\lg w$ from $v$. The proof of the observation is finished.
\end{proof}

We will also use the following consequence of Chernoff's bound (see, for example,~\cite{JLR} and~\cite{AS}).

\begin{theorem}[\textbf{Chernoff's Bound}] 
\par \noindent 
\begin{itemize}
\item [(i)] If $X$ is a Binomial random variable with expectation $\mu$, and $0<\delta<1$, then $$\Pr[X < (1-\delta)\mu] \le \exp \left( -\frac{\delta^2 \mu}{2} \right),$$ and if $\delta > 0$,
\[\Pr\sqbs{X > (1+\delta)\mu} \le \exp\of{-\frac{\delta^2 \mu}{2+\delta}}.\]
\item [(ii)] If $X$ is a Poisson random variable with expectation $\mu$, and $0 < \varepsilon < 1$, then 
$$
Pr\sqbs{X > (1+\varepsilon)\mu}\le \exp \left( -\frac{\varepsilon^2 \mu}{2} \right),
$$
and if $\varepsilon > 0$,
$$
\Pr\sqbs{X > (1+\varepsilon)\mu} \le \left(\frac{e^{\varepsilon}}{(1+\varepsilon)^{1+\varepsilon}}\right)^{\mu}. 
$$
\end{itemize}
In particular, for $X$ being a Poisson or a Binomial random variable with expectation $\mu$ and for $0 < \varepsilon < 1$, we have
$$
Pr\sqbs{|X-\mu| > \varepsilon\mu} \le 2\left( -\frac{\varepsilon^2 \mu}{3} \right).
$$
\end{theorem}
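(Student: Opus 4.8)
The statement is the classical Chernoff--Bernstein large-deviation inequality together with its standard simplifications, and the plan is the usual exponential-moment argument. The only tool is Markov's inequality applied to an exponential: for $t>0$ and any real $a$ one has $\Pr[X\ge a]=\Pr[e^{tX}\ge e^{ta}]\le e^{-ta}\,\E[e^{tX}]$, and likewise $\Pr[X\le a]\le e^{ta}\,\E[e^{-tX}]$. After that, everything reduces to estimating the moment generating function $\E[e^{tX}]$ and then choosing $t$ optimally.

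For part (ii) the moment generating function is exact: if $X$ is Poisson with mean $\mu$ then $\E[e^{tX}]=\exp\!\of{\mu(e^{t}-1)}$ for every real $t$. Substituting this into the bound above with $a=(1+\varepsilon)\mu$ gives $\Pr[X\ge(1+\varepsilon)\mu]\le\exp\!\of{\mu(e^{t}-1)-t(1+\varepsilon)\mu}$, and the exponent is minimised at $t=\ln(1+\varepsilon)$ (admissible because $\varepsilon>0$), which produces exactly $\bigl(e^{\varepsilon}/(1+\varepsilon)^{1+\varepsilon}\bigr)^{\mu}$, i.e.\ the second displayed Poisson inequality. The first one is then obtained from this raw bound by an elementary one-variable calculus estimate: one shows that the relevant auxiliary function of $\varepsilon$ (the exponent $\varepsilon-(1+\varepsilon)\ln(1+\varepsilon)$ compared with the appropriate quadratic) vanishes at the origin and is monotone in the right direction on the indicated range, which rewrites $\bigl(e^{\varepsilon}/(1+\varepsilon)^{1+\varepsilon}\bigr)^{\mu}$ as the stated exponential form.

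For part (i), write $X=\sum_{i=1}^{n}X_i$ with the $X_i$ independent Bernoulli (identically distributed with mean $p=\mu/n$, though the argument only uses independence). By independence and $1+x\le e^{x}$, $\E[e^{tX}]=\prod_i\of{1+p(e^{t}-1)}\le\exp\!\of{\mu(e^{t}-1)}$ for $t\ge 0$, and analogously with $-t$ for the lower tail; thus the Binomial moment generating function is dominated by the Poisson one, and the same optimisation as in (ii) gives the raw bounds $\Pr[X\ge(1+\delta)\mu]\le\bigl(e^{\delta}/(1+\delta)^{1+\delta}\bigr)^{\mu}$ and $\Pr[X\le(1-\delta)\mu]\le\bigl(e^{-\delta}/(1-\delta)^{1-\delta}\bigr)^{\mu}$. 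The upper-tail bound is turned into $\exp\!\of{-\delta^{2}\mu/(2+\delta)}$ using $\ln(1+\delta)\ge 2\delta/(2+\delta)$ (itself equivalent to $(2+\delta)^2\ge 4(1+\delta)$), since this yields $(1+\delta)\ln(1+\delta)-\delta\ge\delta^{2}/(2+\delta)$; and the lower-tail bound is turned into $\exp\!\of{-\delta^{2}\mu/2}$ using $-\ln(1-\delta)\ge\delta$, which — after checking the value at $\delta=0$ — gives $(1-\delta)\ln(1-\delta)+\delta\ge\delta^{2}/2$ on $[0,1)$. Finally, the two-sided ``in particular'' statement follows from a union bound over the two tails together with the observation that for $0<\varepsilon<1$ one has $2+\varepsilon<3$, so each tail exponent is at least $\varepsilon^{2}\mu/3$, giving $\Pr[|X-\mu|>\varepsilon\mu]\le 2\exp\!\of{-\varepsilon^{2}\mu/3}$ (the displayed right-hand side in the statement is missing the ``$\exp$'').

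There is no conceptual obstacle here; the only part requiring care — and the natural place for a write-up to be careless — is the bookkeeping in the elementary inequalities that pass from the sharp ``$(e^{\pm\delta}/(1\pm\delta)^{1\pm\delta})^{\mu}$'' bounds to the clean Gaussian-type tails, in particular pinning down the constants and the admissible ranges of $\delta$ (resp.\ $\varepsilon$). Since this is a textbook result used only as a black box in the sequel, in the paper I would simply cite~\cite{JLR} and~\cite{AS} and record the final simplified inequalities, exactly as the statement does.
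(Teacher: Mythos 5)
The paper does not actually prove this theorem: it is quoted as a standard consequence of Chernoff's bound with a pointer to~\cite{JLR} and~\cite{AS}, so your plan of sketching the exponential-moment argument and otherwise deferring to those references matches what the paper does. Your treatment of part (i) and of the second inequality in part (ii) is the standard, correct derivation: Markov's inequality applied to $e^{tX}$, the bound $\E[e^{tX}]\le\exp(\mu(e^t-1))$, the optimal choice $t=\ln(1\pm\delta)$, then $\ln(1+\delta)\ge 2\delta/(2+\delta)$ for the upper tail and the derivative comparison based on $-\ln(1-\delta)\ge\delta$ for the lower tail; and you are right that the final two-sided display is missing an $\exp$.

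There is, however, a genuine gap at the first display of part (ii). You claim it follows from the raw bound $\bigl(e^{\varepsilon}/(1+\varepsilon)^{1+\varepsilon}\bigr)^{\mu}$ because the exponent is ``monotone in the right direction,'' but the comparison goes the wrong way: with $g(\varepsilon)=(1+\varepsilon)\ln(1+\varepsilon)-\varepsilon-\varepsilon^{2}/2$ one has $g(0)=0$ and $g'(\varepsilon)=\ln(1+\varepsilon)-\varepsilon<0$, so $(1+\varepsilon)\ln(1+\varepsilon)-\varepsilon<\varepsilon^{2}/2$ for all $\varepsilon>0$, and no optimization of $t$ can yield $\exp(-\varepsilon^{2}\mu/2)$ for the Poisson \emph{upper} tail. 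In fact the displayed inequality is false as stated: for fixed $\varepsilon$ and large $\mu$ the upper tail is $\exp\bigl(-\mu[(1+\varepsilon)\ln(1+\varepsilon)-\varepsilon](1+o(1))\bigr)$, which eventually exceeds $\exp(-\varepsilon^{2}\mu/2)$. The display is evidently a misprint for the Poisson \emph{lower} tail $\Pr[X<(1-\varepsilon)\mu]\le\exp(-\varepsilon^{2}\mu/2)$, which is what the cited references give, what the same computation as your binomial lower-tail step proves (since $(1-\varepsilon)\ln(1-\varepsilon)+\varepsilon\ge\varepsilon^{2}/2$), and what your union-bound derivation of the two-sided ``in particular'' bound actually requires in the Poisson case (combined with the upper-tail bound and $2+\varepsilon<3$). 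So the correct move is to flag and repair this display, as you did with the missing $\exp$, rather than to assert a calculus proof of it.
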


Now we are ready to prove the lower bound. First we concentrate on dense graphs for which, in fact, we show a stronger result that no vertex can acquire large weight a.a.s.

\begin{theorem}\label{thm:lower1}
Suppose that $r = r_n \ge c \sqrt{\lg n} / \lg \lg n$ for some sufficiently large $c \in \R$, and consider any acquisition protocol on $\G(n,r)$. Then, a.a.s.\ each vertex in the residual set acquires $O( (r \lg r)^2 )$ weight. As a result, a.a.s.\
$$
a_t \left( \G(n,r) \right) = \Omega \left( \frac {n}{(r \lg r)^{2}} \right).
$$
\end{theorem}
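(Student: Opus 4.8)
The plan is to show that a.a.s.\ \emph{every} vertex $v$ in $\G(n,r)$ can only ever acquire $O((r\lg r)^2)$ weight, regardless of the protocol; the bound on $a_t$ then follows immediately, since the total weight $n$ is distributed over the residual set, so $|{\rm residual\ set}| \ge n / O((r\lg r)^2) = \Omega(n/(r\lg r)^2)$. By Observation~\ref{obs:min_degree}, if $v$ acquires weight $w$, then all vertices that contributed to $w$ lie within graph distance $\lg w$ of $v$; in a random geometric graph, graph distance is (up to constants) comparable to scaled Euclidean distance, so these contributors all lie in a Euclidean ball around $v$ of radius $O(r \lg w)$. Such a ball contains only the initial weight equal to the number of points of $\mathcal{X}_n$ inside it, and that quantity is, for $r$ in the stated range, concentrated around its mean, which is $\Theta(r^2 (\lg w)^2)$. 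Setting $w$ equal to the weight actually acquired and solving $w \le \Theta(r^2(\lg w)^2)$ forces $\lg w = O(\lg r)$ (the $\lg\lg$ terms are lower order), hence $w = O((r\lg r)^2)$.

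The key steps, in order, are as follows. First I would fix a target scale: it suffices to prove that no vertex acquires weight more than $W := C (r\lg r)^2$ for a large constant $C$. Suppose for contradiction some vertex $v$ acquires $w \ge W$. Second, apply Observation~\ref{obs:min_degree} to conclude $\deg(v) \ge \lg w$ and, more importantly, that the $\lceil w \rceil$ units of weight at $v$ all originated from the set $S_v$ of points within \emph{graph} distance $\lg w$ of $v$. Third — and this is the crux — I would translate graph distance into Euclidean distance: a path of length $\ell$ in $\G(n,r)$ joins points at Euclidean distance at most $\ell r$, so $S_v$ is contained in the Euclidean disk $B(v, r \lg w)$. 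Therefore $|S_v| \ge w$. Fourth, I would bound $|S_v|$ from above via concentration. The expected number of points of $\mathcal{X}_n$ in a disk of radius $\rho = r\lg w$ is $\pi \rho^2 = \pi r^2 (\lg w)^2$ (with a constant-factor correction near the boundary of $[0,\sqrt n]^2$, which only helps). Using Chernoff's bound and a union bound over an $O(n)$-sized net of disk centres — or over all $n$ vertices directly, since $\pi r^2 (\lg w)^2 \ge \pi r^2 \gg \lg n$ in the regime $r \ge c\sqrt{\lg n}/\lg\lg n$ with $c$ large, so the failure probability per vertex is $o(1/n)$ — we get that a.a.s.\ $|S_v| \le 2\pi r^2 (\lg w)^2$ for all $v$ simultaneously. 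Fifth, combine: $w \le |S_v| \le 2\pi r^2(\lg w)^2$. Taking logs, $\lg w \le 1 + 2\lg(2\pi) \cdot \tfrac12 + 2\lg r + 2\lg\lg w$, so $\lg w - 2\lg\lg w \le 2\lg r + O(1)$, which gives $\lg w = O(\lg r)$ and hence $w = 2^{O(\lg r)} \cdot (\lg w)^2 \cdot O(1)$... more cleanly, $w \le 2\pi r^2 (\lg w)^2$ together with $\lg w = (2+o(1))\lg r$ yields $w = O((r\lg r)^2)$, contradicting $w \ge W = C(r\lg r)^2$ once $C$ is chosen larger than this implied constant.

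There is one subtlety to handle with care: the constant $c$ in the hypothesis $r \ge c\sqrt{\lg n}/\lg\lg n$ must be chosen large enough that the Chernoff failure probability, which is roughly $\exp(-\Omega(r^2(\lg w)^2)) \le \exp(-\Omega(r^2))$, beats the union bound factor $n$ — that is, we need $r^2 = \Omega(\lg n)$ with a sufficiently large implied constant, which is exactly what the hypothesis on $r$ provides (even the weaker $r \ge c\sqrt{\lg n}$ would do, but the stated form is what we use). A second subtlety: the disk $B(v, r\lg w)$ may stick out of $[0,\sqrt n]^2$; this only decreases the number of points it can contain, so the upper bound on $|S_v|$ is unaffected, and we may as well intersect with the square.

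I expect the main obstacle to be making the graph-distance-to-Euclidean-distance step fully rigorous and uniform: we need that \emph{a priori}, before conditioning on anything about the protocol, for every vertex $v$ the ball $B(v, r\lg w)$ — whose radius depends on the unknown $w$ — captures all contributors. The clean way around this is to note the implication is purely deterministic (Observation~\ref{obs:min_degree} plus "edges join points at distance $\le r$"), so the only probabilistic input is the uniform bound $|\mathcal{X}_n \cap B(x,\rho)| \le 2\pi\rho^2$ valid simultaneously for all $x$ and all relevant $\rho$, and we establish that bound on a suitably fine grid of centres and radii by Chernoff plus a union bound, absorbing the discretization error into the constant. Everything else is routine.
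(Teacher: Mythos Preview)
Your approach is essentially the same as the paper's: use Observation~\ref{obs:min_degree} to confine the contributors to a Euclidean ball of radius $r\lg w$, then use Chernoff plus a union bound to cap the number of points in such a ball. Two remarks.

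First, a small but real slip: you write ``$\pi r^2(\lg w)^2 \ge \pi r^2 \gg \lg n$'' and later ``$\exp(-\Omega(r^2))$ beats the union bound factor $n$''. Under the hypothesis $r \ge c\sqrt{\lg n}/\lg\lg n$ one only has $r^2 = \Theta(\lg n/(\lg\lg n)^2) = o(\lg n)$, so $r^2$ alone does \emph{not} beat the union bound. What saves you is the factor $(\lg w)^2$: since $w \ge C(r\lg r)^2$ forces $\lg w \ge (2+o(1))\lg r \ge (1+o(1))\lg\lg n$, you get $(r\lg w)^2 = \Omega(c^2\lg n)$, which is exactly what is needed. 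Keep that factor; do not drop it. (Your parenthetical ``even the weaker $r\ge c\sqrt{\lg n}$ would do'' has the inequality backwards: $c\sqrt{\lg n}/\lg\lg n$ is a \emph{weaker} lower bound than $c\sqrt{\lg n}$.)

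Second, the paper avoids your ``grid of radii'' / ``solve $w \le 2\pi r^2(\lg w)^2$'' detour with a one-line trick you may want to adopt: since each acquisition move at most doubles the weight at $v$, if $v$ ever reaches weight $\ge 2^\ell$ with $\ell := 2\lg r + 2\lg\lg r + \lg(8\pi)$, then at some moment it has weight $w$ with $2^\ell \le w < 2^{\ell+1}$. This pins $\lg w$ to $[\ell,\ell+1)$, so there is a \emph{single} radius $(\ell+1)r$ to check, and a single Chernoff application per vertex suffices. No grid, no implicit equation.
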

\begin{proof}
Let $\ell = 2 \lg r + 2 \lg \lg r + \lg (8 \pi)$. For a contradiction, suppose that at some point of the process some vertex $v$ acquires weight $w \ge 2^\ell = 8 \pi (r \lg r)^2$. Since one total acquisition move corresponding to transferring all the weight from some neighbour of $v$ onto $v$, increases the weight on $v$ by a factor of at most 2, we may assume that $w < 2^{\ell+1}$. It follows from Observation~\ref{obs:min_degree} that all vertices contributing to the weight of $w$ are at graph distance at most $\ell+1$ from $v$ (and so at Euclidean distance at most $(\ell+1)r$). The desired contradiction will be obtained if no vertex has at least $2^\ell$ vertices (including the vertex itself) at Euclidean distance at most $(\ell+1)r$. 

The remaining part is a simple consequence of Chernoff's bound and the union bound over all vertices. For a given vertex $v$, the number of vertices at Euclidean distance at most $(\ell+1)r$ is a random variable $Y$ that is stochastically bounded from above by the random variable $X \sim \Bin(n-1, \pi (\ell+1)^2 r^2 / n)$ with $\E [X] \sim \pi \ell^2 r^2 \sim 4 \pi (r \lg r)^2$. (Note that $Y=X$ if $v$ is at distance at least $(\ell+1)r$ from the boundary; otherwise, $Y \le X$.) It follows from Chernoff's bound that 
\begin{eqnarray*}
\Prob (Y \ge 2^\ell) &\le& \Prob \Big(X \ge (2+o(1)) \E[X] \Big) \le \exp \Big( -(1/3+o(1)) \E[X] \Big) \\
&\le& \exp \Big( -(4 \pi /3+o(1)) (r \lg r)^2 \Big) \le \exp \Big( -(\pi c^2 /3+o(1)) \lg n \Big) \\
&=& o(1/n),
\end{eqnarray*}
provided that $c$ is large enough. The conclusion follows from the union bound over all $n$ vertices of $\G(n,r)$.
\end{proof}

In order to simplify the proof of the theorem for sparser graphs we will make use of a technique known as de-Poissonization, which has many applications in geometric probability (see~\cite{pen} for a detailed account of the subject). Here we only sketch it.

Consider the following related model of a random geometric graph. Let $V=V'$, where $V'$ is a set obtained as a homogeneous Poisson point process of intensity $1$ in $[0,\sqrt{n}]^2$. In other words, $V'$ consists of $N$ points in the square $[0,\sqrt{n}]^2$ chosen independently and uniformly at random, where $N$ is a Poisson random variable of mean $n$. Exactly as we did for the model $\G(n,r)$, again identifying vertices with their spatial locations, we connect by an edge $u$ and $v$ in $V'$ if the Euclidean distance between them is at most $r$. We denote this new model by $\Po(n,r)$.

The main advantage of defining $V'$ as a Poisson point process is motivated by the following two properties: the number of vertices of $V'$ that lie in any region $A\subseteq [0,\sqrt{n}]^2$ of area $a$ has a Poisson distribution with mean $a$, and the number of vertices of $V'$ in disjoint regions of $[0,\sqrt{n}]^2$ are independently distributed. Moreover, by conditioning $\Po(n,r)$ upon the event $N=n$, we recover the original distribution of $\G(n,r)$. Therefore, since $\Pr(N=n)=\Theta(1/\sqrt n)$, any event holding in $\Po(n,r)$ with probability at least $1-o(f_n)$ must hold in $\G(n,r)$ with probability at least $1-o(f_n \sqrt n)$.

\medskip

Now, let us come back to our problem. For sparser graphs we cannot guarantee that no vertex acquires large weight a.a.s.\ but a lower bound of the same order holds.

\begin{theorem}\label{thm:lower2}
Suppose that $r = r_n \ge c$ for some sufficiently large $c \in \R$. Then, a.a.s.\ 
$$
a_t \left( \G(n,r) \right) = \Omega \left( \frac {n}{(r \lg r)^{2}} \right).
$$
\end{theorem}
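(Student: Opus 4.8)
The plan is to deduce the theorem from the claim that, a.a.s., \emph{in every acquisition protocol} the total weight ending up on vertices of final weight larger than $W_0:=K(r\lg r)^2$ is at most $n/2$; here $K$ is a large absolute constant (allowed to depend on $c$), fixed at the very end. Indeed, writing $R$ for the residual set and $w(v)$ for the final weight, $n=\sum_{v\in R}w(v)\le W_0|R|+n/2$ gives $|R|\ge n/(2W_0)=\Omega(n/(r\lg r)^2)$. By Theorem~\ref{thm:lower1} we may assume $r<c\sqrt{\lg n}/\lg\lg n$. As in the excerpt it is cleaner to prove the claim with probability $1-o(1/\sqrt n)$ in the Poissonized model $\Po(n,r)$ and then de-Poissonize.

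First I would record two deterministic facts. For $v\in R$ let the \emph{territory} $T_v$ be the set of vertices whose initial unit of weight flows to $v$; then $|T_v|=w(v)$, the $T_v$ partition the vertex set, and by Observation~\ref{obs:min_degree} each $T_v$ lies inside the Euclidean ball $B(v,r\lg w(v))$. A routine Chernoff/union-bound argument, exactly like the one in the proof of Theorem~\ref{thm:lower1}, shows that a.a.s.\ every ball of radius $\rho\ge1$ contains at most $2\pi(\rho^2+\lg n)$ vertices; feeding this into $w(v)\le|B(v,r\lg w(v))|$ and solving forces $w(v)=O(\lg n)$ throughout the admissible range of $r$. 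In particular no residual vertex, in any protocol, reaches weight $2^{j_1}$ with $j_1:=\lceil2\lg\lg n\rceil$.

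The core step is a reduction of ``heavy weight'' to a purely geometric quantity. Split the heavy vertices into dyadic classes $H_j=\{v\in R:2^j\le w(v)<2^{j+1}\}$, $j_0:=\lfloor\lg W_0\rfloor\le j<j_1$. Fix $j$, set $\rho_j:=r(j+1)$, and tile $[0,\sqrt n]^2$ into cells of side $\rho_j$; call a cell \emph{$j$-heavy} if the $3\times3$ block of cells around it contains at least $2^j$ vertices. For $v\in H_j$ the ball $B(v,r\lg w(v))\subseteq B(v,\rho_j)$ sits inside the $3\times3$ block around $v$'s cell, so that cell is $j$-heavy; hence, using that the territories are disjoint,
$$\sum_{v\in H_j}w(v)=\Big|\bigcup_{v\in H_j}T_v\Big|\le 9\sum_{Q\ j\text{-heavy}}N^{(3)}(Q),$$
where $N^{(3)}(Q)$ is the number of vertices in the $3\times3$ block of $Q$ (each such block has $\ge2^j$ vertices and the blocks have bounded overlap). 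Summing over $j$, the total heavy weight is, on the a.a.s.\ event of the previous paragraph and for \emph{every} protocol, bounded by the protocol-free random variable $Z:=9\sum_{j_0\le j<j_1}\sum_{Q\ j\text{-heavy}}N^{(3)}(Q)$.

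It remains to show $Z\le n/2$ with probability $1-o(1/\sqrt n)$, and this is the step I expect to be the real work. The virtue of $W_0=K(r\lg r)^2$ is that for $j\ge j_0$ the threshold $2^j$ exceeds the mean $\mu_j:=9\rho_j^2$ of $N^{(3)}(Q)$ by a factor $t_j=2^j/\mu_j$ bounded below, uniformly in $n$ and in $r\ge c$, by some $\tau=\tau(K)$ with $\tau(K)\to\infty$: a short computation gives $t_{j_0}\gtrsim K\lg^2 r/(\lg K+\lg r)^2$, whose infimum over $r\ge c$ is attained at $r=c$ and still tends to infinity with $K$ (this is precisely why $c$ must be large). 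In $\Po(n,r)$ the count $N^{(3)}(Q)$ is $\mathrm{Pois}(\mu_j)$ and counts of disjoint regions are independent, so Chernoff's bound yields $\E\big[N^{(3)}(Q)\,\mathbf{1}[N^{(3)}(Q)\ge2^j]\big]\le\mu_j(\tau/2)^{-2^{j-2}}$; summing over the $O(n/\rho_j^2)$ cells of each grid and over $j\ge j_0$ (the exponents $2^{j-2}$ double, so the series is dominated by its first term) gives $\E Z\le 648\,n\,(\tau/2)^{-2^{j_0-2}}\le 648\,n\,(\tau/2)^{-K/2}<n/100$ once $K$ is large enough. Finally $Z$ is a sum of spatially local contributions: changing one point changes at most $O(1)$ of the values $N^{(3)}(Q)$ in each of the $O(\lg\lg n)$ grids, each by $1$, moving $Z$ by at most $9\cdot2^{j_1}=O(\lg^2 n)$; McDiarmid's bounded-differences inequality (after conditioning on the typical value of the number of points) then gives $\Pr[Z>n/2]\le\Pr[Z>\E Z+n/4]\le\exp(-n/\mathrm{polylog}\,n)=o(1/\sqrt n)$. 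De-Poissonizing transfers $Z\le n/2$ to $\G(n,r)$, which completes the argument. The genuinely delicate points are (a) arranging $t_j\ge\tau(K)\to\infty$ uniformly so that $\E Z=o(n)$ with room to spare, and (b) keeping $Z$ Lipschitz enough for a concentration estimate beating the $1/\sqrt n$ de-Poissonization loss — which is exactly why one first reduces to $r<c\sqrt{\lg n}/\lg\lg n$, making $j_1$ and hence the Lipschitz constant polylogarithmic.
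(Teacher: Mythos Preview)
Your sketch is essentially correct and the argument goes through, but it takes a rather different (and heavier) route from the paper's. The paper does something considerably simpler: it tessellates $[0,\sqrt n]^2$ into $\Theta(n/(r\lg r)^2)$ cells of side $\approx 20\,r\lg r$, calls a cell \emph{dangerous} if its central unit disc is occupied while the cell contains fewer than $1200(r\lg r)^2$ vertices, and shows deterministically (via Observation~\ref{obs:min_degree} once more) that every dangerous cell must retain a residual vertex---otherwise the central vertex would escape along a path of length $\ge 9\lg r$, forcing weight exceeding $1200(r\lg r)^2$ onto some intermediate vertex whose entire territory lies inside the cell, contradicting the vertex count. The lower bound then reduces to counting dangerous cells, which needs only two independent Chernoff estimates (on the central disc and on the full cell) and a single de-Poissonization. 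By contrast you bound the total weight on heavy residual vertices via a dyadic decomposition, multi-scale grids, a protocol-free majorant $Z$, and a bounded-differences concentration step; this is more machinery for the same conclusion, though it does give the extra information that the heavy weight is $o(n)$ rather than merely that heavy vertices are few. Two minor remarks: the factor $9$ in your inequality $|\bigcup T_v|\le 9\sum_Q N^{(3)}(Q)$ is unnecessary (each point of $\bigcup T_v$ already lies in the $3\times3$ block of at least one $j$-heavy cell, so the bound holds with constant $1$); and the McDiarmid step is cleaner if run directly in $\G(n,r)$ with its $n$ i.i.d.\ coordinates, since conditioning on $N$ in $\Po(n,r)$ and then re-establishing the expectation bound $\E[Z\mid N=m]\le n/100$ is an avoidable detour.
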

\begin{proof}
Since Theorem~\ref{thm:lower1} applies to dense graphs, we may assume here that $r = O(\sqrt{\lg n} / \lg \lg n)$ (in particular, $r \lg r = o(\sqrt{n})$). Tessellate $[0,\sqrt{n}]^2$ into $\lfloor \sqrt{n} / (20 r \lg r) \rfloor^2$ squares, each one of side length $(20+o(1)) r \lg r$. Consider the unit circle centered on the center of each square and call it the \emph{center circle}. We say that a given square is \emph{dangerous} if the corresponding center circle contains at least one vertex and the total number of vertices contained in the square is less than $1200 (r \lg r)^2$.

Consider any acquisition protocol. First, let us show that at least one vertex from each dangerous square must belong to the residual set. Let $u_0$ be a vertex inside the corresponding center circle. For a contradiction, suppose that the square has no vertex in the residual set. In particular, it means that $u_0$ moved the initial weight of 1 it started with onto some vertex outside the square through some path $(u_0, u_1, \ldots, u_k)$. Note that the Euclidean distance between $u_0$ and the border of the square (and so also $u_k$) is at least $(20+o(1)) r \lg r / 2 - 1 \ge 9 r \lg r$, provided that $c$ is large enough, and so $k \ge 9 \lg r$. 

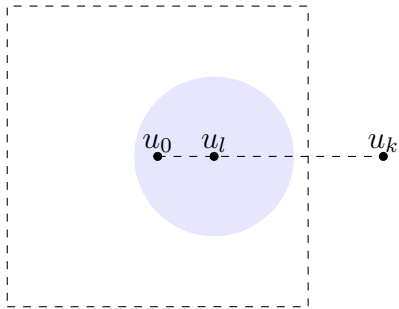
\begin{figure}[ht!]\centering
\begin{tikzpicture}[scale=0.5]
\filldraw[blue!10] (1.5,0) circle (60pt);
\foreach \x in {(0,0), (1.5,0), (6,0)}{
\filldraw \x circle (3pt);}
\draw[dashed] (0,0) -- (6,0);
\draw[dashed] (-4,-4) rectangle (4,4);
\draw (0,0.35) node {$u_0$};
\draw (1.5,0.35) node {$u_l$};
\draw (6,0.35) node {$u_k$};
\end{tikzpicture}
\caption{Residual sets contain at least one vertex from each dangerous square.} \label{fig:dangerous_square}
\end{figure}

Consider the vertex $u_\ell$ on this path, where $\ell = \lfloor 4 \lg r \rfloor \ge 3 \lg r$, provided $c$ is large enough; see Figure~\ref{fig:dangerous_square}. Right after $u_{\ell-1}$ transferred all the weight onto $u_\ell$, $u_\ell$ had weight at least $2^{\ell} \ge r^3 > 1200 (r \lg r)^2$, provided $c$ is large enough. As argued in the proof of the previous theorem, at some point of the process $u_\ell$ must have acquired weight $w$ satisfying $2^{\ell} \le w < 2^{\ell+1}$. Observation~\ref{obs:min_degree} implies that all vertices contributing to the weight of $w$ are at Euclidean distance at most $(\ell+1) r$ from $v$ and so inside the square (as always, provided $c$ is large enough). However, dangerous squares contain less than $1200(r\lg r)^2$ vertices, and so we get a contradiction. The desired claim holds.

Showing that a.a.s.\ a positive fraction of the squares is dangerous is straightforward. In $\Po(n,r)$, the probability that the center circle contains no vertex is $\exp(-\pi) \le 1/3$. On the other hand, the number of vertices falling into the square is a Poisson random variable $X$ with expectation $\mu \sim 400 (r \lg r)^2$. By Chernoff's bound applied with $\eps = e-1$,
$$
\Prob (X \ge e \mu) \le \left(\frac{e^{e-1}}{(1+(e-1))^e}\right)^{\mu}= \exp(-\mu).
$$
Hence, we get
$$
\Prob (X \ge 1200 (r \lg r)^2 ) \le \Prob (X \ge e\mu ) \le \exp(- \mu) \le 1/3,
$$
provided $c$ is large enough. Hence the expected number of dangerous squares is at least $(1/3) (1/400+o(1)) n / (r \lg r)^2 \gg \lg n \to \infty$. By Chernoff bounds, with probability at least $1-o(n^{-1/2})$, the number of dangerous squares in $\Po(n,r)$ is at least $(1/2500) n / (r \lg r)^2$. By the de-Poissonization argument mentioned before this proof, the number of dangerous squares in $\G(n,r)$ is a.a.s.\ also at least $(1/2500) n / (r \lg r)^2$, and the proof of the theorem is finished.
\end{proof}

The only range of $r=r_n$ not covered by the two theorems is when $r < c$ for $c$ as in Theorem~\ref{thm:lower2}. However, in such a situation a.a.s.\ there are $\Omega(n)$ isolated vertices which clearly remain in the residual set. Moreover, if $r$ is such that $r \lg r > \sqrt{n}$, then the trivial lower bound $\Omega(1)$ applies. The lower bound in the main theorem holds for the whole range of $r$.

\section{Upper Bound}

As in the previous section, let us start with a simple, deterministic observation that turns out to be useful in showing an upper bound. Before we state it, let us define a family of rooted trees as follows. Let $\hat{T}_0$ be a rooted tree consisting of a single vertex $v$ (the root of $\hat{T}_0$). For $i \in \N$, we define $\hat{T}_i$ recursively: the root $v$ of $\hat{T}_i$ has $i$ children that are roots of trees $\hat{T}_0, \hat{T}_1, \ldots, \hat{T}_{i-1}$; see Figure~\ref{fig:Tree_T}. 

\begin{figure}[ht!]\centering
\begin{tikzpicture}[scale=0.75]
\foreach \x in {(0,0), (-2,-1), (-1,-1), (0,-1), (-2,-2), (-1,-2), (0,-2), (0,-3)}{
\filldraw \x circle (2pt);
}
\foreach \x in {(-2,-1), (-1,-1), (0,-1),(1.1,-0.6)}{
\draw (0,0) -- \x;
}
\draw (0.75,-1) node {$\dots$};
\draw (-1,-1) -- (-2,-2);
\draw (-1,-2) -- (0,-1) -- (0,-3);
\draw (1.8,-1) node {$\hat{T}_{i-1}$};
\end{tikzpicture}\hskip 1 in
\begin{tikzpicture}[scale=0.75]
\foreach \x in {(0,0)}{
\filldraw \x circle (2pt);
}
\draw (-2,-1.1) node {$\hat{T}_0$};
\draw (-1,-1.1) node {$\hat{T}_1$};
\draw (0,-1.1) node {$\hat{T}_2$};
\foreach \x in {(-1.8,-0.8), (-0.8,-0.8), (0,-0.75),(1.1,-0.6)}{
\draw (0,0) -- \x;
}
\draw (0.77,-1.2) node {$\dots$};
\draw (1.8,-1.1) node {$\hat{T}_{i-1}$};
\draw[white] (0,-3) node {.};
\end{tikzpicture}
\caption{The tree $\hat{T}_i$.} \label{fig:Tree_T}
\end{figure}
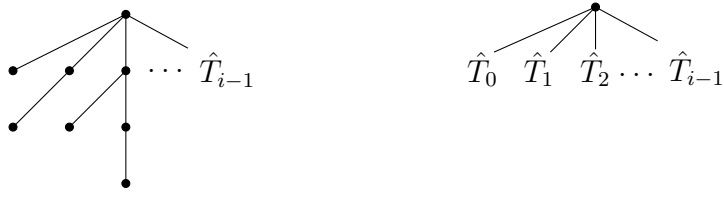

Clearly, $\hat{T}_i$ has $2^i$ vertices and depth $i$. Moreover, it is straightforward to see that vertices of $\hat{T}_i$ can move their initial weight of 1 to the root $v$ (in particular, $a_t(\hat{T}_i)=1$): indeed, this clearly holds for $i=0$ so suppose that it holds inductively up to $i-1$. Then, since all children of the root of $\hat{T}_i$ can send all their accumulated weight to the root of $\hat{T}_i$ (starting from the smallest subtree), this also holds for $i$. This, in particular, shows that Observation~\ref{obs:min_degree} is tight. 

As showed in the previous section, the main bottleneck that prevents us from moving a large weight to some vertex in $\G(n,r)$ is that there are simply not enough vertices in the Euclidean neighborhood of a vertex. If we want to match the lower bound, then rooted trees induced by the acquisition protocol must be as deep as possible in order to access vertices that are in a Euclidean sense far away from the corresponding roots. It turns out that trees $\hat{T}_i$ from the family we just introduced are efficient from that perspective. However, we cannot guarantee that the vertex set of $\G(n,r)$ can be partitioned in such a way that each set has some tree from the family as a spanning subgraph. Fortunately, it is easy to ``trim'' $\hat{T}_i$ to get a tree on $n < 2^i$ vertices that can shift all of its initial weight to the root.

\begin{observation}\label{obs:trees}
For any $d \in \N \cup \{0\}$ and $n \le 2^d$, $\hat{T}_d$ contains a rooted sub-tree $T$ on $n$ vertices such that $a_t(T)=1$. Moreover, the number of vertices at distance $\ell$ ($0 \le \ell \le d$) from the root of $T$ is at most $\binom{d}{\ell}$. 
\end{observation}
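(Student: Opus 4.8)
The plan is to prove both assertions simultaneously by strong induction on $d$, in the following slightly strengthened form: for every $d \in \N \cup \{0\}$ and every $1 \le n \le 2^d$, the tree $\hat{T}_d$ contains a sub-tree $T$ on $n$ vertices, rooted at the root of $\hat{T}_d$, such that the vertices of $T$ can move all their initial weight to the root of $T$ \emph{and} such that $T$ has at most $\binom{d}{\ell}$ vertices at distance $\ell$ from its root for every $\ell$. Since moving all weight to a single vertex certifies $a_t(T)=1$, this implies the observation. The base case $d=0$ is immediate: then $n=1$ and $T=\hat{T}_0$.

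For the inductive step, fix $d\ge 1$ and $1\le n\le 2^d$. The key structural remark is that deleting from $\hat{T}_d$ the subtree rooted at the child $c$ of the root that is the root of $\hat{T}_{d-1}$ leaves a copy $S$ of $\hat{T}_{d-1}$ that still contains the root of $\hat{T}_d$, while the subtree rooted at $c$ is itself an isomorphic copy of $\hat{T}_{d-1}$. I would then split $n$ in a \emph{balanced} way: put $a=\min(n,2^{d-1})$ and $b=n-a$, so that $1\le a\le 2^{d-1}$, $0\le b\le 2^{d-1}$, and, crucially, $b\le a$ (all three facts following at once from $1\le n\le 2^d$). Apply the induction hypothesis to $S$ to obtain a sub-tree $T_a$ on $a$ vertices, rooted at the root of $\hat{T}_d$, whose weight can all be moved to that root. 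If $b=0$ take $T=T_a$ and we are done; otherwise apply the induction hypothesis to the copy of $\hat{T}_{d-1}$ rooted at $c$ to obtain a sub-tree $T_b$ on $b$ vertices, rooted at $c$, whose weight can all be moved to $c$, and set $T=T_a\cup T_b$ together with the edge from the root of $\hat{T}_d$ to $c$.

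It remains to check the two properties of $T$. For the acquisition claim: first run on $T_a$ the protocol that concentrates all its weight at the root of $\hat{T}_d$, which then carries weight $a$; then run on $T_b$ the protocol that concentrates all its weight at $c$, which then carries weight $b$; since $a\ge b$, one further total acquisition move transfers the weight of $c$ onto the root, which ends with weight $a+b=n$, as desired. For the depth bound: a vertex at distance $\ell$ from the root of $T$ lies either in $T_a$, where by induction there are at most $\binom{d-1}{\ell}$ of them, or in $T_b$, in which case it is at distance $\ell-1$ from $c$, so there are at most $\binom{d-1}{\ell-1}$ of them by induction; adding the two contributions gives $\binom{d-1}{\ell}+\binom{d-1}{\ell-1}=\binom{d}{\ell}$.

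The only genuinely delicate point is the choice of the split $a=\min(n,2^{d-1})$, $b=n-a$. A naive construction guided by the binary expansion of $n-1$ (attaching to the root the full subtrees $\hat{T}_i$ indexed by the set bits) fails, because it can force the root to absorb, in a single move, a subtree whose weight far exceeds what the root has accumulated so far. The balanced split is exactly what guarantees $b\le a$, which legalises the final move; and since the recursion from a tree at level $d-1$ to a tree at level $d$ mirrors the Pascal identity $\binom{d}{\ell}=\binom{d-1}{\ell}+\binom{d-1}{\ell-1}$, the depth bound is preserved for free.
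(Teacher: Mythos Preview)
Your proof is correct and takes a genuinely different route from the paper's.

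The paper argues the two assertions separately. For the first it trims $\hat{T}_d$ branch by branch at the root: keep the branches $\hat{T}_0,\ldots,\hat{T}_{k_0}$ intact (where $k_0=\lfloor\lg n\rfloor-1$), delete the branches $\hat{T}_{k_0+2},\ldots,\hat{T}_{d-1}$ entirely, and recurse into $\hat{T}_{k_0+1}$ to adjust the count; the acquisition protocol then absorbs the intact branches in increasing order of size before swallowing the trimmed one. For the second assertion it simply proves the bound for the full tree $\hat{T}_d$ (so automatically for any sub-tree) via the hockey-stick identity $\sum_{k=\ell-1}^{d-1}\binom{k}{\ell-1}=\binom{d}{\ell}$.

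You instead exploit the cleaner recursive decomposition $\hat{T}_d=\hat{T}_{d-1}\cup(\text{edge})\cup\hat{T}_{d-1}$, split $n$ as balanced as possible into $a+b$ with $b\le a$, and handle both assertions in a single induction. This buys you a more uniform argument and replaces the hockey-stick identity by the simpler Pascal identity; the paper's construction, on the other hand, makes the retained branches completely explicit (entire copies of $\hat{T}_0,\ldots,\hat{T}_{k_0}$), which is closer in spirit to how one might think of greedily filling levels. Either construction suffices for the later embedding argument in the paper, since only the level bound $\binom{d}{\ell}$ and the property $a_t(T)=1$ are used there.
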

\begin{proof}
In order to obtain the desired tree $T$ on $n$ vertices, we trim $\hat{T}_d$ by cutting some of its branches (from largest to smallest, level by level). We may assume that $n \ge 2$; otherwise, the statement trivially holds.

Since we will be trimming the tree recursively, let us concentrate on $v$, the root of $\hat{T}_d$, and $d$ branches attached to it. Our goal is to get a tree rooted at $v$ that has $n \ge 2$ vertices. Let $k_0$ be the largest integer $k$ such that 
$$
1 + \Big( 1 + 2 + 4 + \ldots 2^k \Big) = 2^{k+1} \le n;
$$
that is, $k_0 = \lfloor \lg n \rfloor - 1$ (note that $k_0 \ge 0$ as $n \ge 2$ and that $k_0 \le d-1$ as $n \le 2^d$). We leave the branches inducing the trees $\hat{T}_0, \hat{T}_1, \ldots, \hat{T}_{k_0}$ untouched. We trim the branches inducing the trees $\hat{T}_{k_0+2}, \hat{T}_{k_0+3}, \ldots, \hat{T}_{d}$ completely (note that possibly $k_0 = d-1$ in which case we trim nothing). Finally, we would like to carefully trim the branch inducing the tree $\hat{T}_{k_0+1}$ so that the number of vertices it contains is precisely $n - 2^{k_0+1}$. If $n - 2^{k_0+1}$ is equal to 0 or 1, then we trim the whole branch or leave just the root of this branch, respectively. Otherwise, we recursively trim the branch as above. It is straightforward to see that all vertices of $T$ can move their initial weight of 1 to the root of $T$ which, in particular, implies that $a_t(T)=1$, thus proving the first part.

In order to show the second part, it is enough to prove the desired property for $\hat{T}_d$ (since $T$ is a sub-tree of $\hat{T}_d$). We prove it by (strong) induction on $d$; clearly, the statement holds for $d=0$ and $\ell=0$. Let $d_0 \in \N$ and suppose inductively that the property holds for all $d$ such that $0 \le d \le d_0-1$. The claim clearly holds for $\ell=0$. We count the number of grandchildren at distance $\ell$ (for any $1 \le \ell \le d_0$) from the root $v$ by considering grandchildren at distance $\ell-1$ from each child of $v$. By the recursive construction of $\hat{T}_d$ we get that the number of vertices at distance $\ell$ from $v$ is $\sum_{k=\ell-1}^{d_0-1} \binom{k}{\ell-1} = \binom{d_0}{\ell}$ (this equality is well-known and can be easily proven by induction). The proof of the observation is finished.
\end{proof}

\bigskip

Before we are ready to state the next result, we need to introduce a few definitions. Let $c, \eps \in (0,1)$ be any constants, arbitrarily small. Suppose that we are given a function $r = r_n$ such that $r \lg r \le \sqrt{n}$ and $r \ge C$ for some large constant $C=C(c,\eps)$ that will be determined soon. Let $k = \lceil \sqrt{n}/(c r \lg r) \rceil$ and tessellate $[0,\sqrt{n}]$ into $k^2$ \textbf{large squares}, each one of side length $x r \lg r$, where $x = \sqrt{n}/(k r \lg r)$. Clearly, $c/2 \le x \le c$ (the lower bound follows as $c r \lg r \le \sqrt{n}$) and $x \sim c$, provided $r \lg r = o(\sqrt{n})$. Now, let $\ell = 20 \lceil x r \lg r / (20 c r) \rceil = 20 \lceil x \lg r/(20 c) \rceil$ and tessellate each large square into $\ell^2$ \textbf{small squares}, each one of side length $yr$, where $y =x \lg r/\ell$; see Figure~\ref{fig:tessellate}. Clearly, $c/2 \le y \le c$ (the lower bound follows assuming that $C$ is large enough which we may) and $y \sim c$, provided $r = r_n \to \infty$ as $n \to \infty$.

\begin{figure}[ht!]\centering
\begin{tikzpicture}[scale=0.75]
\draw[thick] (1.5,1.5) rectangle (7.5,7.5);
\foreach \x in {1.5,3,4.5,6}{
\foreach \y in {0.375,0.75,1.125}{
\draw[dashed] (1.2,\x+\y) -- (7.8,\x+\y);
\draw[dashed] (\x+\y,1.2) -- (\x+\y,7.6);
}}
\foreach \x in {3,4.5,6}{
\draw (1.2,\x) -- (7.8,\x);
\draw (\x,1.2) -- (\x,7.8);}
\draw (2.25,8.1) node {\large{$\frac{\sqrt{n}}{k}$}};
\draw (0.8,7.3) node {\large{$\frac{\sqrt{n}}{kl}$}};
\end{tikzpicture}
\caption{We tessellate $[0,\sqrt{n}]^2$ into $k^2$ large squares, and each large square is tessellated into $l^2$ small squares; $k=\lceil \sqrt{n}/(cr\lg r)\rceil$ and $l=20\lceil x\lg r/(20c)\rceil$.} \label{fig:tessellate}
\end{figure}
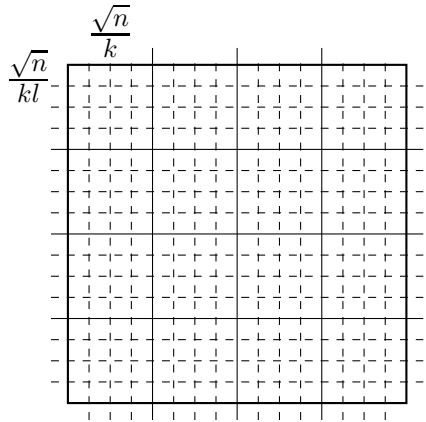

We say that a small square is \textbf{good} if the number of vertices it contains is between $(1-\eps) (yr)^2$ and $(1+\eps) (yr)^2$; otherwise, it is \textbf{bad}. Moreover, we say that a large square is \textbf{good} if all small squares it contains are good and the following properties hold (otherwise, it is \textbf{bad}):
\begin{itemize}
\item [(a)] no vertex lies on the border of the large square nor on its two diagonals,
\item [(b)] no two vertices lie on any line parallel to any base of the large square,
\item [(c)] no two vertices lie on any line passing the center of the large square.
\end{itemize}

Now we are ready to state the following crucial observation.

\begin{theorem}\label{thm:distr_good_squares}
For any pair $c, \eps \in (0,1)$ of constants, there exists a constant $C=C(c,\eps)$ such that the following two properties hold a.a.s.\ for $\G(n,r)$. 
\begin{itemize}
\item [(i)] All large squares are good, provided that $r \ge C \sqrt{\log n}$.
\item [(ii)] The number of large squares that are bad is at most $n/(r^2 \lg^5 r)$, provided that $r \ge C$.
\end{itemize}
\end{theorem}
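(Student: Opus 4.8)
The plan is to prove both parts by a direct union-bound/Chernoff argument over the tessellation, treating the two ranges of $r$ separately but with the same underlying mechanism.

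\medskip

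\noindent \textbf{Part (i).} Here $r \ge C\sqrt{\log n}$, so each small square has area $(yr)^2 = \Theta(r^2) = \Omega(\log n)$. First I would fix a single small square: the number of vertices in it is $\mathrm{Bin}(n, (yr)^2/n)$ with mean $\mu = (yr)^2 \sim c^2 r^2$. By Chernoff's bound, $\Prob(\text{this square is bad}) \le 2\exp(-\eps^2 \mu / 3) \le 2\exp(-\eps^2 c^2 r^2/4)$, which is $o(1/n)$ once $C = C(c,\eps)$ is large enough (since $r^2 \ge C^2 \log n$). Since the total number of small squares is at most $n$, a union bound gives that a.a.s.\ every small square is good. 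For properties (a)--(c): each is the event that some specific pair of vertices (or a vertex and a fixed line) lies on a common line, a measure-zero event for each fixed pair; there are $O(n^2)$ pairs and each line/diagonal is fixed, so the probability that any (a)--(c) failure occurs is $0$ — these hold a.a.s.\ (in fact surely) because the point locations are continuous i.i.d.\ uniform. Combining, a.a.s.\ all large squares are good.

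\medskip

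\noindent \textbf{Part (ii).} Now $r \ge C$ only, so $\mu = (yr)^2 = \Theta(r^2)$ may be as small as a large constant, and we can no longer kill the per-square failure probability below $1/n$; instead we show the \emph{number} of bad large squares is small in expectation and then concentrate. I would first bound the probability $q$ that a single large square is bad. A large square is bad if some small square inside it is bad or one of (a)--(c) fails; since (a)--(c) fail with probability $0$, $q \le \ell^2 \cdot p$, where $p = \Prob(\text{a given small square is bad}) \le 2\exp(-\eps^2 \mu/3) = 2\exp(-\Omega(r^2))$ by Chernoff, and $\ell^2 = O((\lg r)^2)$. Hence $q = O((\lg r)^2 e^{-\Omega(r^2)})$. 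The expected number of bad large squares is then at most $k^2 q = O\!\big( \tfrac{n}{(r\lg r)^2} \cdot (\lg r)^2 e^{-\Omega(r^2)} \big) = O\!\big( \tfrac{n}{r^2} e^{-\Omega(r^2)} \big)$. I need this to be at most, say, $\tfrac12 \cdot n/(r^2 \lg^5 r)$, i.e.\ $e^{-\Omega(r^2)} \le \tfrac{1}{2\lg^5 r}$, which holds once $C$ is large enough (as $r^2$ dominates $\log\log r$). Finally, to pass from expectation to an a.a.s.\ bound I would work in the Poissonized model $\Po(n,r)$, where the numbers of vertices in the $k^2$ large squares are \emph{independent} Poisson variables, so "large square is bad" are independent events (each large square depends only on the point process restricted to it); thus the count of bad large squares is a sum of independent indicators with mean $\le \tfrac12 n/(r^2\lg^5 r)$, and — provided this mean is $\omega(\log n)$, which one checks holds since $r^2 \lg^5 r \le$ something like $\log^{C'} n$ in the relevant regime, or alternatively one can simply use Markov when the mean is small — Chernoff gives that with probability $1 - o(n^{-1/2})$ the count is at most $n/(r^2\lg^5 r)$. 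De-Poissonization (the $\Theta(1/\sqrt n)$ argument recalled before the theorem) then transfers this to $\G(n,r)$ a.a.s.

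\medskip

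\noindent \textbf{Main obstacle.} The delicate point is part (ii): when $r$ is a large constant, $\mu$ is only a constant, so the Chernoff exponent $e^{-\Omega(r^2)}$ is a (small but fixed) constant, and we must verify that the constants line up so that $k^2 q$ actually falls below the target $n/(r^2\lg^5 r)$ — this is exactly what forces $C$ to be chosen large depending on $c$ and $\eps$, and one has to be careful that the $\ell^2 = \Theta(\lg^2 r)$ small-square count and the $\lg^5 r$ in the denominator together still leave the exponential $e^{-\Omega(r^2)}$ enough room. The second subtlety is the expectation-to-a.a.s.\ step: if the expected number of bad large squares is itself $o(\log n)$ one cannot use Chernoff to get concentration at the right scale, but then Markov's inequality already shows the number is $0$ (hence $\le n/(r^2\lg^5 r)$) with probability $1-o(1)$, so one splits into cases according to whether $k^2 q$ is large or small; in either case the independence afforded by Poissonization (and then de-Poissonization) closes the argument.
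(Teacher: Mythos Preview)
Your proposal is correct and follows essentially the same route as the paper: Chernoff on each small square, a union bound for part~(i), and for part~(ii) a bound $q \le \ell^2 \cdot 2\exp(-\Omega(r^2)) \le 1/\lg^4 r$ on the probability a large square is bad, then concentration for the count in the Poissonized model followed by de-Poissonization.

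One loose end worth tightening: in your concentration step you offer Markov's inequality as a fallback ``when the mean is small,'' but de-Poissonization needs failure probability $o(n^{-1/2})$, which Markov cannot deliver. The paper's fix (and what your phrase ``in the relevant regime'' is gesturing at) is simply to observe that part~(i) already covers $r \ge C\sqrt{\log n}$, so in part~(ii) one may assume $r = O(\sqrt{\log n})$; then $\E[Y] = k^2/\lg^4 r = \Omega(n/\log^3 n)$ is always large, and a single Chernoff bound gives $\Pr(Y \ge 2\E[Y]) = \exp(-\Omega(n/\log^3 n)) = o(n^{-1/2})$ with no case split needed.
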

\begin{proof}
Properties (a)-(c) on the distribution of the vertices hold with probability 1 for all large squares. Hence, we need to concentrate on showing that small squares are good.

For part (i), consider any small square in $\G(n,r)$. The number of vertices in such a square follows a binomial random variable $X \sim \Bin(n,(yr)^2/n)$ with $\E[X] =(yn)^2$. It follows immediately from Chernoff's bound that the probability of the square being bad can be estimated as follows:
$$
\Prob \left( |X - (yr)^2| \ge \eps (yr)^2 \right) \le 2 \exp \left( - \frac {\eps^2 (yr)^2}{3} \right) \le 2/n^2 = o(1/n),
$$
provided that $C \ge \sqrt{6}/(c\eps)$. Hence, since there are in total $O(n)$ small squares appearing in large squares, the expected number of such small squares is $o(1)$, and the conclusion follows from the first moment method.

For part (ii), consider any small square in $\Po(n,r)$. As before, let $X \sim Po( (yr)^2 )$ be the random variable counting the number of vertices in the square. By Chernoff's bound, for the probability of the square being bad we have
$$
\Prob \left( |X - (yr)^2| \ge \eps (yr)^2 \right) \le 2 \exp \left( - \frac {\eps^2 (yr)^2}{3} \right) \le 2 \exp \left( - \frac { (\eps c r)^2}{12} \right).
$$
By a union bound, a given large square is bad with probability at most
$$
2 \ell^2 \exp \left( - \frac { (\eps c r)^2}{12} \right) \le 2 \left( 2 \lg r \right)^2 \exp \left( - \frac { (\eps c r)^2}{12} \right) \le \frac {1}{\lg^4 r};
$$
both inequalities hold provided $C$ is large enough. (Note that $\ell \le 20 \lceil \lg r/20 \rceil \le 2 \lg r$, provided $r \ge C$.)

Now, the number of large squares that are bad can be stochastically bounded from above by the random variable $Y \sim \Bin(k^2, 1/\lg^4 r)$. By part (i), we may assume that, say, $r = O(\log n)$ and so, in particular, $r \lg r = o(\sqrt{n})$. Note that 
$$
\E[Y] = \frac {k^2}{\lg^4 r} \sim \frac {n}{(c r \lg r)^2 \lg^4 r} \le \frac {n}{3 r^2 \lg^5 r},
$$
provided $C$ is large enough. On the other hand, note that, say, $\E[Y] = \Omega( n / \log^3 n )$. Hence, it follows immediately from Chernoff's bound that 
$$
\Prob \left( Y \ge \frac {n}{r^2 \lg^5 r} \right) \le \Prob \left( Y \ge 2 \E[Y] \right) = \exp \left( - \Omega( n / \log^3 n ) \right) = o(1/\sqrt{n}).
$$
By the de-Poissonization argument explained above, the desired property holds for $\G(n,r)$ and the proof is finished.
\end{proof}

The next deterministic result shows that there exists an acquisition protocol that pushes weights from all vertices of each large good square into a single vertex.

\begin{theorem}\label{thm:good_squares} 
Fix $c = 1/10000$, $\eps = 1/100$, $n \in \N$, and radius $r=r_n \ge C$ for some large enough constant $C \in \R$. Consider any distribution of vertices that makes a given large square $\mathcal{S}$ good (with respect to $c$, $\eps$, $r$, and $n$). Finally, let $G$ be any geometric graph induced by vertices from $\mathcal{S}$ with radius $r$. Then, $a_t(G) = 1$.
\end{theorem}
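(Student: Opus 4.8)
The plan is to exhibit an explicit acquisition protocol on $G$ that moves all weight into one vertex, by using the structure of a good large square: it is tessellated into $\ell^2$ small squares, each of which is ``good'' (contains $(1\pm\eps)(yr)^2$ vertices with $y\sim c$), so each small square has $\Theta((cr)^2)$ vertices, and adjacent (even diagonally adjacent) small squares are within distance $r$ of each other since each has side length $yr$ with $y \le c = 1/10000$ small --- in fact several consecutive small squares fit inside a disc of radius $r$. The first step is to show that, working inside a single small square together with a few of its neighbours, one can build a copy of a trimmed tree $\hat T_d$ from Observation~\ref{obs:trees}: since a small square has at least $(1-\eps)(yr)^2 \ge (c r/2)^2$ vertices, we can take $d = \lfloor \lg((cr/2)^2)\rfloor = 2\lg r + O(1)$ and find a rooted subtree $T \subseteq \hat T_d$ on exactly that many vertices with $a_t(T)=1$; the constraint $r \ge C$ guarantees $d$ is large enough for the counting to go through. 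The key point from Observation~\ref{obs:trees} is that $T$ has at most $\binom{d}{\l}$ vertices at depth $\l$, so the whole tree has Euclidean ``radius'' at most $d \cdot r = O(r\lg r)$, which comfortably fits inside the large square of side $xr\lg r$; but more usefully, we only need each \emph{edge} of $T$ to have length at most $r$, which we arrange by embedding consecutive levels of $T$ into adjacent small squares.

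The second step is the aggregation across small squares. After collapsing each small square into a single vertex holding weight $\Theta((cr)^2)$, we are left with an $\ell \times \ell$ grid of ``super-vertices,'' any two grid-adjacent ones being at distance $\le \sqrt{2}\,yr \le r$, and each holding roughly the same weight (within a factor $(1+\eps)/(1-\eps)$, which for $\eps = 1/100$ is close to $1$). The plan is to merge these $\ell^2$ weights pairwise in a balanced binary-tree fashion: pair up small squares into dominoes, move weight across (legal since the two weights are within a factor $2$ of each other --- here $\eps = 1/100$ and the geometric slack matter), then pair up dominoes, and so on, roughly doubling the accumulated weight at each of the $\sim 2\lg\ell = O(\lg\lg r)$ rounds. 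One has to be slightly careful that the ratio between the heaviest and lightest current pile stays below $2$ throughout; this is where conditions (a)--(c) in the definition of a good large square enter --- they rule out ties and degeneracies so that a clean deterministic pairing (e.g.\ by a space-filling order on the small squares, or by recursive halving of the large square along lines avoiding all vertices) is well defined and the two members of each pair are always comparable.

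The main obstacle I expect is the bookkeeping of the \emph{order} of moves so that the ``smaller-to-larger'' constraint is never violated. Inside a single small square this is handled by Observation~\ref{obs:trees} (empty subtrees first, smallest branches first). Across squares, the danger is that after building the $\hat T_d$'s the piles are only approximately equal, and a naive merge could try to push a pile of weight $w$ onto one of weight slightly less than $w$. The fix is to not fully collapse each small square first: instead, interleave the intra-square and inter-square moves so that at the moment of any cross-square transfer the target has already absorbed a little extra from its own square, guaranteeing it dominates the source. Making this precise --- choosing concrete constants so that $(1+\eps)(yr)^2 / \big((1-\eps)(yr)^2\big) < 2$ with room to spare and verifying the pile ratio never exceeds $2$ over all $O(\lg\lg r)$ merge rounds --- is the technical heart, but it is a finite check with the fixed values $c = 1/10000$, $\eps = 1/100$, and $r\ge C$; everything else is geometry (distances $\le r$) and the tree identity $\sum_{k=\l-1}^{d-1}\binom{k}{\l-1} = \binom{d}{\l}$ already established.
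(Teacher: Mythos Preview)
Your two-phase plan---collapse each small square to a single heavy vertex, then merge the resulting $\ell\times\ell$ grid of ``super-vertices'' in a balanced binary fashion---is genuinely different from the paper's argument, and it has a gap that the interleaving remark does not close. The difficulty is not the weight \emph{ratio} but the \emph{location} of the accumulated piles. Once every small square has been collapsed into one vertex of weight in $[(1-\eps)(yr)^2,(1+\eps)(yr)^2]$, the remaining task is an acquisition problem on $\ell^2$ essentially equal piles, where two piles are adjacent only if their underlying vertices lie within Euclidean distance $r$, i.e., within grid distance at most about $1/y\le 2/c$. The doubling argument behind Observation~\ref{obs:min_degree}, applied to this weighted configuration, shows that whichever pile ends up holding all the weight must be within super-vertex-graph distance $\lg\!\big(\ell^2\cdot\tfrac{1+\eps}{1-\eps}\big)=2\lg\ell+O(1)$ of every other pile, hence within Euclidean distance $O(r\lg\ell)=O(r\lg\lg r)$. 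But the large square has side $xr\lg r$, and once $r$ is large enough that $c\,\ell>2\lg\ell+O(1)$ (equivalently $\lg r\gg (1/c)\lg\lg r$) this is impossible: after your first phase the residual instance already has $a_t>1$, regardless of how merges are scheduled or how many reserve vertices you hold back. Your reading of conditions (a)--(c) is also off: they are general-position assumptions on vertex \emph{positions}, used so that the splitting lines of the recursion miss all vertices; they are not about breaking weight ties.

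The paper avoids this obstruction by never collapsing small squares as a first phase. It cuts $\mathcal S$ into four triangles along the diagonals and, for each triangle, embeds a \emph{single} trimmed $\hat T_d$ (with $d=2\lg r+2\lg\lg r+O(1)$) directly on \emph{all} of that triangle's vertices, level by level. The triangle is sliced into $\ell/20$ horizontal strips by auxiliary lines $A_i$; at step $i$ the current family of subtree roots (all at depth $i$ in $T$) is placed in the region between $L_{i-1}$ and $A_i$, together with as many whole subtrees as fit, and whenever a region grows wider than $r/3$ it is split through the apex into two sub-regions handled recursively. The bound $\binom{d}{i}$ from Observation~\ref{obs:trees}, together with $i\le\ell/20\le(\lg r)/10$, guarantees that the depth-$i$ roots always fit (Error~2); an inductive argument pins $L_i$ within four auxiliary strips of $A_i$, so every tree edge has Euclidean length at most $50\,yr+r/3<r$ (Error~3); and a separate count controls the widths after each split (Error~1). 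The four roots, one per triangle, sit within $\sqrt5\,yr<r/2$ of the centre and therefore form a clique, which is how the four triangles are finally merged.
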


Before we prove this theorem, let us state the following corollary that follows immediately from Theorems~\ref{thm:good_squares} and~\ref{thm:distr_good_squares}. 

\begin{corollary}\label{cor:upper_bound}
Suppose that $r = r_n$ is such that $r \lg r \le \sqrt{n}$ and $r \ge C$ for some sufficiently large $C \in \R$. Then, a.a.s.\ 
$$
a_t \left( \G(n,r) \right) = O \left( \frac {n}{(r \lg r)^{2}} \right).
$$
\end{corollary}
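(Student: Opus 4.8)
The plan is to read Corollary~\ref{cor:upper_bound} off from Theorems~\ref{thm:good_squares} and~\ref{thm:distr_good_squares}, together with the trivial remark that every small square induces a clique. I keep the tessellation of the setup: $[0,\sqrt n]^2$ is cut into $k^2=\lceil\sqrt n/(cr\lg r)\rceil^2$ large squares, and each of these into $\ell^2$ small squares, with $c=1/10000$, $\eps=1/100$, and $C$ large enough for both theorems. Since $r\lg r\le\sqrt n$ one has $k=\Theta(\sqrt n/(r\lg r))$, hence $k^2=\Theta(n/(r\lg r)^2)$, and, as already observed in the paper, $\ell\le 2\lg r$ for $r\ge C$. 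The key structural fact, used for both good and bad large squares, is subadditivity of $a_t$ over a vertex partition: if the vertices of $\G(n,r)$ are split according to which region of a fixed tessellation they lie in, then optimal protocols run inside the distinct regions (each using only edges internal to its region) can be concatenated and then extended to a maximal protocol, which only deletes positive-weight vertices; hence $a_t(\G(n,r))$ is at most the sum, over regions $R$, of the acquisition number of the subgraph induced by the vertices in $R$.

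First I would handle the good large squares. Applying the bound above to the tessellation into large squares, and using Theorem~\ref{thm:good_squares} (which gives $a_t=1$ for the subgraph induced by a good large square), the good squares together contribute at most $k^2=O(n/(r\lg r)^2)$ residual vertices.

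Next I would dispose of the bad large squares crudely. A small square has side $yr$ with $y\le c<1/\sqrt2$, so its diameter is $yr\sqrt2<r$ and it induces a clique; a clique on $m\ge 1$ vertices has acquisition number $1$ (sweep all weight onto a single vertex, which is always the heaviest). Hence, refining any large square into its $\ell^2$ small squares and applying the subadditivity bound, any large square --- good or bad --- contributes at most $\ell^2$ residual vertices. By Theorem~\ref{thm:distr_good_squares}(ii), a.a.s.\ at most $n/(r^2\lg^5 r)$ large squares are bad, so a.a.s.\ the bad ones contribute at most $\frac{n}{r^2\lg^5 r}\cdot\ell^2\le\frac{4n}{r^2\lg^3 r}=O\!\big(n/(r\lg r)^2\big)$. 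Summing the two estimates yields a.a.s.\ $a_t(\G(n,r))=O(n/(r\lg r)^2)$, which is the corollary.

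Thus the deduction of the corollary itself is essentially bookkeeping, and the only substantive ingredient is Theorem~\ref{thm:good_squares} --- whose proof comes next and is where I expect the real difficulty. There one must exhibit, inside the graph induced by a \emph{good} large square, a spanning tree that is ``$\hat T$-like'' in the sense of Observation~\ref{obs:trees} --- at every vertex the child-subtree sizes $s_1\le\cdots\le s_k$ should satisfy $s_j\le 1+s_1+\cdots+s_{j-1}$, which forces $a_t=1$ --- while using only pairs at Euclidean distance $\le r$ as edges. The hard part will be this geometric embedding: a recursive ``detach one half as a hanging subtree, recurse on the other half, bottom out at clique-tiles of side $\le r/2$'' construction is natural, but one must check that consecutive backbone vertices and each detached-subtree root stay within distance $r$, that the pieces genuinely partition all vertices, and that the size inequalities survive the $(1\pm\eps)$ fluctuations in the per-small-square counts. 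The small constants $c=1/10000$, $\eps=1/100$ --- and the fact that a good large square, of diameter $\ll 10^{-3}r\lg r$, can be crossed in $O(c\lg r)$ hops while the depth budget is $\approx 2\lg r$ --- are there precisely to leave enough slack for all of this.
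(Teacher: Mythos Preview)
Your deduction of the corollary is correct and essentially identical to the paper's: both partition the large squares into good and bad, collapse each good one to a single vertex via Theorem~\ref{thm:good_squares}, and bound each bad one by $O(\lg^2 r)$ via a clique tessellation, then invoke Theorem~\ref{thm:distr_good_squares}(ii). The only cosmetic differences are that the paper first splits off the dense case $r\ge C\sqrt{\log n}$ using part~(i) (which you correctly observe is unnecessary, since part~(ii) already covers it), and the paper re-tessellates bad squares into fresh cells of side $r/\sqrt2$ rather than reusing the existing small squares as you do---both yield the same $O(\lg^2 r)$ bound.
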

\begin{proof}
Let $c, \eps$ be fixed as in Theorem~\ref{thm:good_squares} and let $C = C(c, \eps)$ be the constant implied by Theorem~\ref{thm:distr_good_squares}. If $r \ge C \sqrt{\log n}$, then Theorem~\ref{thm:distr_good_squares}(i) implies that a.a.s.\ all large squares are good and so by Theorem~\ref{thm:good_squares} a.a.s.\ 
$$
a_t(\G(n,r)) \le k^2 = O \left( \frac {n}{(r \lg r)^2} \right).
$$

On the other hand, if $r \ge C$, then Theorem~\ref{thm:distr_good_squares}(ii) implies that a.a.s.\ at most $n/(r^2 \lg^5 r)$ large squares are bad. Clearly, each large bad square can be tessellated into $O(\lg^2 r)$ squares of side length $r / \sqrt{2}$, and so the graph $G$ induced by vertices of any large bad square satisfies $a_t(G) = O(\lg^2 r)$. This time we get that a.a.s.
$$
a_t(\G(n,r)) \le k^2 + \frac {n}{r^2 \lg^5 r} \cdot O(\lg^2 r) = O \left( \frac {n}{(r \lg r)^2} \right),
$$
and the proof of the corollary is finished.
\end{proof}

The only ranges of $r=r_n$ not covered by Corollary~\ref{cor:upper_bound} are when $r < C$ for $C$ as in the corollary or when $r \lg r > \sqrt{n}$. For the first case there is nothing to prove as the bound $O(n)$ trivially holds. The latter case follows immediately by monotonicity of $a_t(G)$.

Hence, it remains to prove Theorem~\ref{thm:good_squares}.

\begin{proof}[Proof of Theorem~\ref{thm:good_squares}]
Split $\mathcal{S}$ into four triangles using the two diagonals of $\mathcal{S}$. (Note that by property (a) of the distribution of the vertices, no vertex lies on the border of any triangle.) By symmetry, we may concentrate on the bottom triangle: the base of the triangle has length $\ell (yr)$ and the height is $\ell (yr)/2$. Since $\ell$ is divisible by $2$, the center of the large square is the corner of four small squares. Clearly, the number of small squares that are completely inside the triangle is $\ell^2 / 4 - \ell / 2$  (the total area of the triangle is $\ell^2/4$, and there are $\ell$ small squares only partially contained in this area, contributing a total area of $\ell/2$); on the other hand,  $\ell^2 / 4 + \ell / 2$ of them cover the triangle. Hence, since all small squares are good, the number of vertices $z$ that lie in the triangle is at most 
$$
\left( \frac {\ell^2}{4} + \frac{\ell}{2} \right) (1+\eps) (yr)^2 = \left( 1 + \frac{2}{\ell} \right) (1+\eps) \frac {(xr\lg r)^2}{4} \le (1+2\eps) \frac {(xr\lg r)^2}{4} =: z^+,
$$
provided that $C$ is large enough. Similarly, we get that $z \ge z^- := (1-2\eps) (xr\lg r)^2 / 4$.

Let $d$ be the smallest integer such that $2^d \ge z$. Since $z^- \le z \le z^+$, it follows that $d = \lg z + O(1) = 2 \lg r + 2 \lg \lg r + O(1)$. Observation~\ref{obs:trees} implies that there exists a rooted sub-tree $T$ of $\hat{T}_d$ on $z$ vertices with $a_t(T)=1$. Our goal is to show that $T$ can be embedded on the set of vertices that belong to the triangle with the root being the vertex closest (in Euclidean distance) to the apex of the triangle. If this can be done, then one can merge all the accumulated weights from the four triangles partitioning $\mathcal{S}$ into one of them and finish the proof: indeed, as the Euclidean distance from the closest vertex to the apex of the triangle is at most $\sqrt{5} y r \le \sqrt{5} c r \le r / 2$, the four roots induce a clique; see Figure~\ref{fig:root_of_5}.

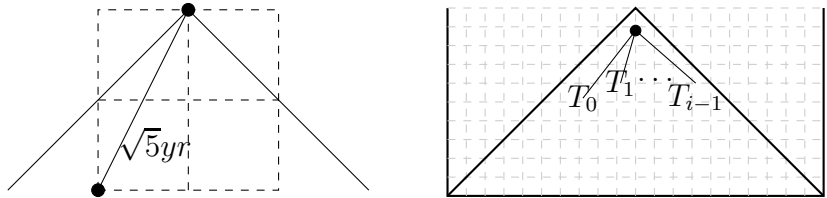
\begin{figure}[ht!]\centering
\begin{tikzpicture}[scale=1.2]
\filldraw (0,2) circle (2pt);
\filldraw (-1,0) circle (2pt);
\draw (-1,0) -- (0,2);
\draw (-0.37,0.5) node {$\sqrt{5}yr$};
\draw (-2,0) -- (0,2) -- (2,0);
\draw[dashed] (-1,0) rectangle (1,2);
\draw[dashed] (-1,1) -- (1,1);
\draw[dashed] (0,0) -- (0,2);
\end{tikzpicture}
\hskip 0.35 in
\begin{tikzpicture}[scale=1]
\draw[thick] (0,2.5) -- (0,0) -- (5,0) -- (5,2.5);
\foreach \x in {0.25,0.5,0.75,1,1.25,1.5,1.75,2,2.25,2.5,2.75,3,3.25,3.5,3.75,4,4.25,4.5,4.75}{\draw[dashed, black!20] (\x,0) -- (\x,2.5);}
\foreach \x in {0.25,0.5,0.75,1,1.25,1.5,1.75,2,2.25,2.5}{\draw[dashed, black!20] (0,\x) -- (5,\x);}
\draw[thick] (0,0) -- (2.5,2.5) -- (5,0);
\foreach \x in {(2.5,2.2)}{\filldraw \x circle (2pt);}
\foreach \x in {(1.8,1.3),(2.3,1.5),(3.3,1.5)}{\draw (2.5,2.2) -- \x;}
\foreach \x in {(1.8,1.3)}{\draw \x node {$T_0$};}
\foreach \x in {(2.3,1.5)}{\draw \x node {$T_1$};}
\foreach \x in {(2.8,1.55)}{\draw[thick] \x node {$\dots$};}
\draw (3.3,1.3) node {$T_{i-1}$};
\end{tikzpicture}
\caption{On the left: there is a vertex in the triangle at distance at most $\sqrt{5}yr$ from the apex. On the right: in each triangle, we attempt to embed a tree that includes all vertices in the triangle. The four roots induce a clique, and so if such trees can be embedded, all weights in the square can be pushed onto a single vertex.}
\label{fig:root_of_5}
\end{figure}

We divide the triangle into $\ell / 20$ strips by introducing \textbf{auxiliary lines} $A_i$ ($i \in \{0, 1, \ldots, \ell/20\}$; recall that $\ell$ is divisible by 20), all of them are lines parallel to the base of the triangle.  $A_0$ is the line that passes through the apex of the triangle, $A_1$ is at distance $10yr$ from $A_0$, etc., $A_{\ell/20}$ coincides with the base of the triangle.
Note that there are exactly 10 strips of little squares between any two consecutive auxiliary lines $A_{j-1}$ and $A_j$. Any two points $a_1, a_2$ on the base of the triangle and a line $L$ parallel to the base induce an \textbf{auxiliary region}, a trapezoid with vertices $a_1, a_2$ and two vertices on $L$, the intersection of the line between the apex of the triangle and $a_1$ with $L$, and the intersection of the line between the apex of the triangle and $a_2$ with $L$, respectively. In particular, the triangle itself is a (degenerate) auxiliary region, induced by the two vertices from the base of the triangle and $A_0$. 

We will now give a recursive algorithm how to embed the tree $T$ on all $z$ vertices of the triangle. As already mentioned, we pick the vertex closest in Euclidean distance to the apex of the triangle and assign it to the root of $T$. Let $L_0$ be any line parallel to the base separating the vertex assigned to the root from other vertices that are not yet assigned to any vertex of $T$. (Note that by our assumption of the distribution of the vertices, there are no two vertices on any line parallel to the base.) This will be a typical situation that we have to deal with, in a recursive fashion. Suppose thus that we are given a line $L_{i-1}$ parallel to the base such that vertices above $L_{i-1}$ are already assigned to vertices in $T$, and vertices below $L_{i-1}$ that belong to the auxiliary region $\mathcal{Q}$ we currently deal with are not yet assigned to vertices in $T$. We will always keep the property that $\mathcal{Q}$ contains exactly the number of vertices we need to assign to some part of the tree $T$; these vertices induce a family of rooted trees in $T$, with roots that are at graph distance $i$ from the root of $T$. Denote by $Q_i$ and $R_i$ the number of vertices that belong to $\mathcal{Q}$ and, respectively, to the part of $\mathcal{Q}$ above $A_i$; see Figure~\ref{fig:5}.

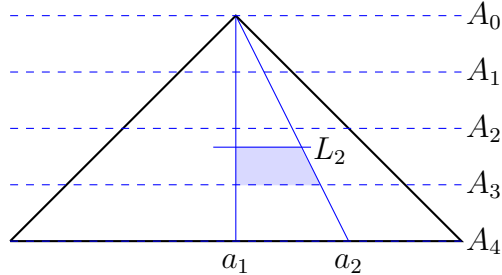
\begin{figure}[ht!]\centering
\hskip 0.5 in
\begin{tikzpicture}
\filldraw[blue!15] (3,0.75) -- (4.12,0.75) -- (3.85,1.25)--(3,1.25);
\draw[blue] (3,3) -- (3,0);
\draw[blue] (3,3) -- (4.5,0);
\draw[thick] (0,0) -- (3,3) -- (6,0) -- (0,0);
\foreach \x in {0,1,2,3,4}{\draw (6.3,3-\x*0.75) node {$A_\x$};\draw[blue, dashed] (0,3-\x*0.75) -- (6,3-\x*0.75);}
\draw (3,-0.3) node {$a_1$};
\draw (4.5,-0.3) node {$a_2$};
\draw[blue] (2.7,1.25) -- (4,1.25);
\draw (4.25,1.2) node {$L_2$};
\end{tikzpicture}
\caption{The number of vertices in the shaded region is $R_3$, the number of vertices in the trapezoid determined by $L_2$, the base of the triangle, and the two blue sides of the triangle associated with $\mathcal{Q}$ is $Q_3$.\label{fig:5}}
\end{figure}

Let $a_1$ and $a_2$ be the two corners of $\mathcal{Q}$ that belong to the base of the triangle. Let $b_1$ and $b_2$ be the intersection points of $A_i$ with the line going through the apex and $a_1$, and with the line going through the apex and $a_2$, respectively; see Figure~\ref{fig:6}. If the Euclidean distance between $b_1$ and $b_2$ is more than $r/3$, then we split $\mathcal{Q}$ into two auxiliary regions (the first one induced by $b_1$ and some $b$ on $A_i$, the other one induced by $b$ and $b_2$; in both situations the auxiliary line $L_{i-1}$ is used), where $b$ is chosen in such a way that $Q_i$ vertices are partitioned into two families of rooted trees in $T$ as evenly as possible. Observe that it is possible to split $\mathcal{Q}$ in such a way so that both auxiliary sub-regions contain at least $Q_i/4$ vertices; indeed, one can order the family of rooted trees according to their sizes and then notice that adding one rooted tree to one of the auxiliary sub-regions obtained after splitting can increase the total number of vertices there by a multiplicative factor of at most 2. (Note that by property (c) of the distribution of the vertices, we can perform a split so that no vertex belongs to the border of any resulting auxiliary region.) We stop the algorithm prematurely if the Euclidean distance between $b_1$ and $b$ (or between $b$ and $b_2$) is less than $r/20$ or more than $r/3$ (\textbf{Error 1} is reported). If everything goes well, we deal with each auxiliary region recursively (we update $Q_i$ and $R_i$, and all lines defining the auxiliary region).

\begin{figure}[ht!]\centering
\begin{tikzpicture}[scale=1.5]
\draw (2,0) -- (5,0);
\draw[blue,dashed] (2,0.75) -- (5,0.75);
\draw (3,-0.2) node {$a_1$};
\draw (4.5,-0.2) node {$a_2$};
\draw[blue] (2.7,1.25) -- (4,1.25);
\draw (4.25,1.3) node {$L_2$};
\draw (5.25,0.75) node {$A_3$};
\draw[blue] (3,0) -- (3,2);
\draw[blue] (4.5,0) -- (3.5,2);
\filldraw (3,0.75) circle (1.5pt);
\filldraw (4.15,0.75) circle (1.5pt);
\draw (2.8,0.6) node {$b_1$};
\draw (4.45,0.6) node {$b_2$};
\filldraw (3.65,0.75) circle (1.5pt);
\draw (3.55, 0.55) node {$b$}; 
\draw (3.85,0) -- (3.5,1.25);
\end{tikzpicture}
\caption{If the Euclidean distance between $b_1$ and $b_2$ is more than $r/3$, we split the region into two regions.} \label{fig:6}
\end{figure}
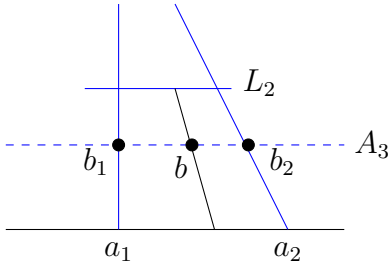

Now, we want to assign all roots from the family of rooted trees (recall that they are at level $i$ of $T$) to vertices of $\mathcal{Q}$ above $A_i$. If there are more than $R_i$ vertices on level $i$ in $T$, then stop the algorithm prematurely (\textbf{Error 2} is reported). In fact, we typically only need to embed a small portion of the vertices of level $i$, but we nevertheless stop prematurely if $R_i$ is smaller than the total number of vertices at level $i$ in the tree. Otherwise, we first assign all roots of the family of rooted trees we deal with. Then, we order the trees rooted at them according to their sizes (in non-decreasing order), and keep adding whole rooted trees, as long as the total number of vertices added is at most $R_i$ (see Figure~\ref{fig:tree}). By the same argument as before, we are guaranteed that at least $R_i/2$ vertices are assigned to the corresponding vertices of $T$. Clearly, if $i=\ell/20$, we are able to fit all rooted trees, and so all $R_i$ (which is equal to $Q_i$ in this case) vertices are dealt with. On the other hand, that is, as long as $i < \ell/20$, we introduce any line $L_i$, parallel to the base, that separates vertices of $\mathcal{Q}$ that are assigned (that are above the line) from those that are still not assigned to any vertex in $T$ (below the line). (As usual, by property (b) of the distribution of the vertices, we can do it so that no vertex lies on $L_i$.) We continue recursively with the new auxiliary region below $L_i$ and the new family of rooted trees consisting of all the branches that are not assigned to any vertices; see Figure~\ref{fig:tree}. Note that the line $L_i$ depends on $\mathcal{Q}$, and  different auxiliary regions corresponding to embedding vertices of $T$ of the same level might have a different line $L_i$. We will show below that these lines will all be close to $A_i$. 

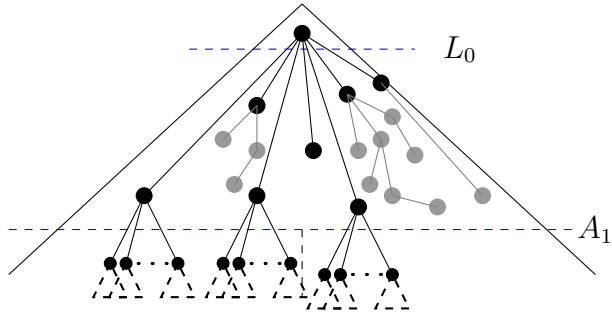
\begin{figure}[ht!]\centering
\begin{tikzpicture}[scale=3]
\foreach \x in {(5.25,5.35), (5.4,5.5), (5.5,5.33), (5.8,5.15), (4.8,5.55), (4.65,5.4), (4.8,5.35), (4.7,5.2), (5.35,5.4), (5.3,5.2), (5.4,5.15), (5.6,5.1)}{
\filldraw[black!40] \x circle (1pt);
}
\foreach \x in {(5.2,5.6), (5.35,5.65), (4.8,5.55)}{
\filldraw \x circle (1pt);}
\draw[dashed,blue] (4.5,5.8) -- (5.5,5.8);
\draw (5.7,5.8) node {$L_0$};
\draw (5,5.87) -- (5.2,5.6);
\draw (5,5.87) -- (4.8,5.55);
\draw[black!50] (4.65,5.4) -- (4.8,5.55);
\draw[black!50] (4.8,5.35) -- (4.8,5.55);
\draw[black!50] (4.8,5.35) -- (4.7,5.2);
\draw (5,5.87) -- (5.35,5.65);
\draw[black!50] (5.8,5.15) -- (5.35,5.65);
\draw[black!50] (5.2,5.6) -- (5.25,5.35);
\draw[black!50] (5.2,5.6) -- (5.4,5.5);
\draw[black!50] (5.3,5.2) -- (5.35,5.4);
\draw[black!50] (5.4,5.15) -- (5.35,5.4);
\draw[black!50] (5.4,5.15) -- (5.6,5.1);
\draw[black!50] (5.5,5.33) -- (5.4,5.5);
\draw[black!50] (5.35,5.4) -- (5.2,5.6);
\draw (6.3,4.8) -- (5,6) -- (3.7,4.8);
\draw[dashed,blue] (3.7,5) -- (6.2,5);
\draw (6.3,5) node {$A_1$};
\draw[dashed] (5,5) -- (5,4.7);
\filldraw (5,5.87) circle (1pt);
\foreach \x in {(5.25,5.1), (4.8,5.15), (5.05,5.35), (4.3,5.15)}{
\filldraw \x circle (1pt);
\draw (5,5.87) -- \x;}
\foreach \x in {(5.1,4.8),(5.17,4.8),(5.4,4.8)}{\filldraw \x circle (0.75pt); \draw \x -- (5.25,5.1);\draw (5.3,4.8) node {$\dots$};}
\foreach \x in {(4.65,4.85),(4.72,4.85),(4.95,4.85)}{\filldraw \x circle (0.75pt); \draw \x -- (4.8,5.15);\draw (4.85,4.85) node {$\dots$};}
\foreach \x in {(4.15,4.85),(4.22,4.85),(4.45,4.85)}{\filldraw \x circle (0.75pt); \draw \x -- (4.3,5.15);\draw (4.35,4.85) node {$\dots$};}
\foreach \x in {4.45,4.95,4.22,4.15,4.65,4.72}{\foreach \y in {4.85}{\draw[dashed,thick] (\x,\y) -- (\x-0.075,\y-0.15) -- (\x+0.075,\y-0.15) -- (\x,\y);}}
\foreach \x in {5.4,5.1,5.17}{\foreach \y in {4.8}{\draw[dashed,thick] (\x,\y) -- (\x-0.075,\y-0.15) -- (\x+0.075,\y-0.15) -- (\x,\y);}}
\end{tikzpicture}
\caption{Vertices from layer 1 in the tree are assigned to vertices in $Q$. We assign the rest of the vertices in $\mathcal{Q}$ by embedding entire branches of the tree, as long as the number of vertices assigned is at most $R_i$ (in grey). The remaining branches become roots for the next iteration. }\label{fig:tree}
\end{figure}

Finally, if at some point of this process two vertices in $\mathcal{Q}$ are assigned to two adjacent vertices in $T$ that are at Euclidean distance more than $r$, then we clearly have to stop the algorithm prematurely (\textbf{Error 3} is reported). 

It remains to argue that we never stop the algorithm prematurely as this implies that $T$ is embedded on the vertices inside the triangle. Let us deal first with Error~2, then with Error~3, leaving Error~1 for the end. 

\medskip

\textbf{Error 2}---level $i$ in $T$ contains more vertices than are available in $\mathcal{Q}$ above $A_i$ (that is, more than $R_i$): First, let us observe that for $i \in \{1, 2, \ldots, 50\}$, the auxiliary line $A_i$ intersects the triangle so that the Euclidean distance between the two points on the sides of the triangle under consideration intersecting with $A_i$ is $(20yr) i \le (20 c r) i = r i /500 \le r/3$. Hence, splitting of auxiliary regions cannot happen during the first 50 rounds. On the other hand, for $i \in \{1, 2, \ldots, 50\}$ we have $(20yr) i \ge (10 c r) i = r i /1000$. Let us then concentrate on any $i \in \{51, 52, \ldots, \ell/20\}$. We show, inductively, that when dealing with line $A_i$, the two corresponding points $b_1$ and $b_2$ are at distance at least $r/20$. The claim is true for $A_{50}$ as argued above. Suppose then that the claim holds for $A_{i-1}$ for some $i \in \{51, 52, \ldots, \ell/20\}$. If $\mathcal{Q}$ is split into two auxiliary regions, then the claim holds for $A_i$ unless \textbf{Error~1} is reported. On the other hand, if no splitting is performed, then the Euclidean distance between the two corresponding points can only increase, and so the claim clearly holds for $A_i$. This implies, in particular, that $\mathcal{Q}$ contains at least one small square, and thus $R_i \ge (1-\eps)(yr)^2 \ge (1-\eps)(cr/2)^2 \ge r^2 10^{-9}$. On the other hand, since
$$
i \le \frac {\ell}{20} = \frac {(x r \lg r)/(yr)}{20} \le \frac {c \lg r}{10 c} = \frac {\lg r}{10},
$$
we get from Observation~\ref{obs:trees} that the number of vertices on level $i$ in $T$ is at most
$$
\binom{d}{i} \le \binom{2 \lg r + 2 \lg \lg r + O(1)}{\lg r / 10 } \le \binom{ 3 \lg r }{  \lg r / 10 } \le (30e)^{\lg r / 10} \le 2^{7 \lg r / 10} < r^2 10^{-9},
$$
provided that $C$ is large enough. Hence, this error never occurs.

\medskip

\textbf{Error 3}---two vertices assigned to adjacent vertices in $T$ are at distance more than $r$: It follows from the definition of $L_i$ that for any $i \in \{ 1, 2, \ldots, \ell/20\}$, $L_i$ lies above the auxiliary line $A_i$ ($L_0$ is exceptional and lies slightly below $A_0$). We are going to argue that $L_i$ is relatively close to $A_i$. 

\smallskip

\noindent \emph{Claim: For any $i \in \{ 4, 5, \ldots, \ell/20\}$, $L_i$ lies below the auxiliary line $A_{i-4}$.}

\smallskip

We will be done once the claim is proved as it implies that we never connect vertices by an edge that are at Euclidean distance more than $50 (yr) + r/3 \le 50 c r + r/3 < r$. Indeed, vertices that need to be connected by an edge must lie in the part of $\mathcal{Q}$ between $L_{i-1}$ and $A_i$. The Euclidean distance between $L_{i-1}$ and $A_i$ is at most $50(yr)$ and the intersection of $A_i$ and $\mathcal{Q}$ is at most $r/3$; see Figure~\ref{fig:connecting_Vertical}.

\smallskip

\noindent \emph{Proof of the Claim:} For a contradiction, suppose that there exists $i$ such that $L_i$ lies above $A_{i-4}$ and consider the smallest such $i$. Hence, $L_{i-1}$ lies below $A_{i-5}$. Let $\mathcal{Q}_1$ be the part of $\mathcal{Q}$ that lies between $L_{i-1}$ and $L_i$, and recall that $\mathcal{Q}_1$ contains at least $R_i/2$ vertices. Similarly, let $\mathcal{Q}_2$ be the part between $L_{i-1}$  and $A_i$ and recall that $\mathcal{Q}_2$ contains precisely $R_i$ vertices. 
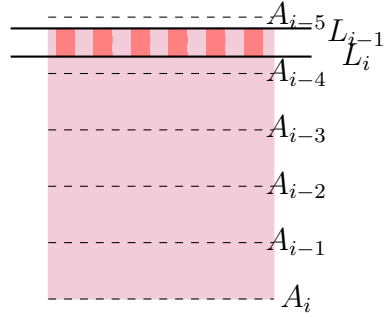
\begin{figure}[ht!]\centering
\begin{tikzpicture}
\filldraw[purple!20] (0,0) -- (3,0) -- (3,4.8*0.75) -- (0,4.8*0.75) -- (0,0);
\foreach \x in {0.1,0.6,1.1,1.6,2.1,2.6}{\filldraw[red!50, dashed] (0+\x,4.3*0.75) -- (0.25+\x,4.3*0.75) -- (0.25+\x,4.8*0.75) -- (0+\x,4.8*0.75);}
\foreach \i in {1,2,3,4, 5}{
\draw[dashed] (0,0.75*\i) -- (3,0.75*\i);
\draw (3.3,0.75*\i) node {$A_{i-\i}$};
}
\draw[dashed] (0,0) -- (3,0);
\draw (3.3,0) node {$A_{i}$};
\draw[thick] (-0.5,0.75*4.3) -- (3.5,0.75*4.3);
\draw (4.1,0.75*4.3) node {$L_{i}$};
\draw[thick] (-0.5,0.75*4.8) -- (3.5,0.75*4.8);
\draw (4.1,0.75*4.7) node {$L_{i-1}$};
\end{tikzpicture}
\caption{All vertices that need to be connected by an edge must be in the part of $\mathcal{Q}$ between $L_{i-1}$ and $A_i$.} \label{fig:connecting_Vertical}
\end{figure}
The fact that the area of $\mathcal{Q}_1$ is at least five times smaller than the one of $\mathcal{Q}_2$ but it contains at least half of vertices will lead us to the desired contradiction. Recall that the length of the intersection of $A_{i-4}$ with the triangle is $s \ge r/20-80(yr) \ge r/20-r/125 \ge r/25$. Hence, the number of small squares covering $\mathcal{Q}_1$ is at most $10(u+2)$, where $u=\lceil s/(yr) \rceil \ge 400$. The number of vertices in $\mathcal{Q}_1$ is then at most $10(u+2)(1+\eps)$, and so $R_i \le 20(u+2)(1+\eps)$. On the other hand, the number of small squares that are completely contained in $\mathcal{Q}_2 \setminus \mathcal{Q}_1$ is at least $40(u-2)$, and so $R_i$ is at least $40(u-2)(1-\eps)$. The contradiction follows, since $20(u+2)(1+\eps)<40(u-2)(1-\eps)$ for any $u \ge 400$.

\medskip

\textbf{Error 1}---the Euclidean distance between $b_1$ and $b$ (or $b$ and $b_2$) is either less than $r/20$ or more than $r/3$: Suppose that we partition $\mathcal{Q}$ containing $Q_i$ vertices into $\mathcal{Q}_1$ and $\mathcal{Q}_2$, where $\mathcal{Q}_1$ is the part of $\mathcal{Q}$ induced by $b_1$ and $b$, all the way down to $A_{\ell/20}$. Recall that $\mathcal{Q}_1$ contains at least $Q_i/4$ vertices, and that the Euclidean distance between $b_1$ and $b_2$ is more than $r/3$ (since we performed splitting). 

\begin{figure}[ht!]\centering
\begin{tikzpicture}
\draw[thick] (0,0) -- (5,0);
\draw[thick] (0,0) -- (2.5,5);
\draw[thick] (5,5) -- (5,0);
\draw[thick] (2,0) -- (3.5,5); 
\draw[thick, blue] (1,3.1) -- (5,3.1);
\draw[blue] (0.6,3.2) node {$L_{i-1}$};
\filldraw (1.55,3.1) circle (2pt);
\draw (1.25,3.35) node {$d_1$};
\draw (5.3,3.35) node {$d_2$};
\draw (2.85,3.35) node {$d$};
\filldraw (2.9,3.1) circle (2pt);
\filldraw (5,3.1) circle (2pt);
\filldraw (5,0) circle (2pt);
\filldraw (0,0) circle (2pt);
\filldraw (2,0) circle (2pt);
\draw (1.8,2) node {$\mathcal{Q}_1$};
\draw (-0.3,0.3) node {$b_1$};
\draw (5.3,0.3) node {$b_2$};
\draw (2.3,0.3) node {$b$};
\draw[black!60] (5,2.4) -- (1.2,2.4);
\draw (0.75,2.4) node {$A_{i-1}$};
\draw [brace] (1.7,3.25) -- (4.9,3.25);
\draw (3.35,3.5) node {$s$};
\draw[white] (0,-1.1) node {.};
\end{tikzpicture}\hskip 0.5 in
\begin{tikzpicture}
\filldraw[red!30] (1.5,3) rectangle (5,3.25);
\filldraw[red!30] (5,0) rectangle (4.75,3.25);
\foreach \x/\y in {0/0,0/0.25,0.25/0.5,0.25/0.75,0.5/1,0.5/1.25,0.75/1.5,0.75/1.75,1/2,1/2.25,1.25/2.5,1.25/2.75}{\filldraw[red!30] (0+\x,0+\y) rectangle (0.25+\x,0.25+\y);}
\draw[thick] (0,0) -- (5,0);
\foreach \x in {0.25,0.5,0.75,1,1.25,1.5,1.75,2,2.25,2.5,2.75,3,3.25,3.5,3.75,4,4.25,4.5,4.75}{\draw[dashed, black!20] (\x,0) -- (\x,5);}
\foreach \x in {0.25,0.5,0.75,1,1.25,1.5,1.75,2,2.25,2.5}{\draw[dashed, black!20] (0,\x) -- (5,\x);
\draw[dashed, black!20] (0,\x+2.5) -- (5,\x+2.5);}
\draw[thick] (0,0) -- (2.5,5);
\draw[thick] (5,5) -- (5,0);
\draw[thick] (2,0) -- (3.5,5); 
\draw[thick, blue] (0,3.1) -- (5,3.1);
\draw[blue] (-0.4,3.2) node {$L_{i-1}$};
\draw [bracem] (0,-0.1) -- (1.9,-0.1);
\draw [bracem] (0,-0.6) -- (4.9,-0.6);
\draw (0.9,-0.4) node {$<r/3$}; 
\draw (2.5,-1) node {$>r/20$}; 
\draw[black!60] (5,2.4) -- (1,2.4);
\draw (-0.3,2.4) node {$A_{i-1}$};
\end{tikzpicture}
\caption{On the left: definitions of points and regions used in \textbf{Error~1}. On the right: illustration of the squares in case \textbf{Error 1} occurs because the Euclidean distance between $b_1$ and $b$ is less than $r/20$.} \label{FigureError1}
\end{figure}
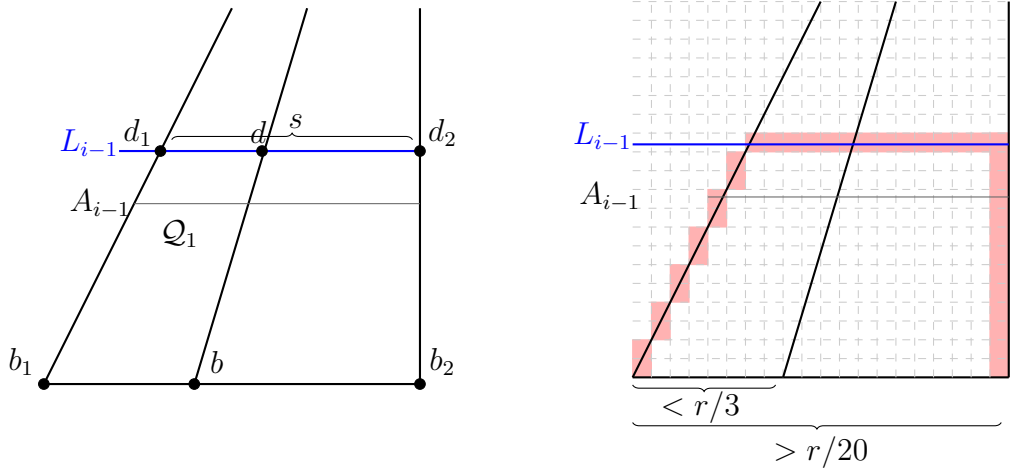

Suppose that \textbf{Error 1} occurs because the Euclidean distance between $b_1$ and $b$ is less than $r/20$. Exactly the same argument can be applied to the case when the Euclidean distance between $b$ and $b_2$ is less than $r/20$. Let $d_1$, $d$, and $d_2$ be the three points of intersection of the line $L_{i-1}$ with the lines going between the apex of the triangle and $b_1$, $b$ and, respectively, $b_2$; see Figure~\ref{FigureError1}. Note that the Euclidean distance between $d_1$ and $d$ is less than $r/20$ and, since by the claim $L_{i-1}$ lies below $A_{i-5}$, the Euclidean distance between $d_1$ and $d_2$, denoted by $s$, satisfies $s > r/3 - 100(yr) \ge 3 r /10$. As the corresponding triangles are similar, the length of the intersection of each horizontal line between $L_{i-1}$ and $A_{\ell/20}$ inside $\mathcal{Q}_1$ is at most a factor of $(r/20)/(r/3) = 3/20$ of the total length of the intersection of the line with the triangle. Hence, the area of $\mathcal{Q}_1$ is by a multiplicative factor of at most $3/20$ smaller than the area of $\mathcal{Q}$, which will be denoted by $A$. 

Arguing as in the previous error, the area of small squares completely contained in $\mathcal{Q}$ is at least $A \cdot \frac {u-2}{u} \cdot \frac {10}{11} \ge 0.9 A$ ($u = \lceil s/(yr) \rceil \ge \lceil (3r/10)/(yr) \rceil \ge 3000$). Indeed, since $L_{i-1}$ might cross small squares, the first row of small squares that intersects $\mathcal{Q}$ might be completely lost, giving an additional factor of $10/11$ (note that there are at least $10$ complete rows between $A_{i-1}$ and $A_i$). It follows that $Q_i \ge 0.9 A (1-\eps) > 0.89 A$.

On the other hand, the area of small squares having non-empty intersection with $\mathcal{Q}_1$ is at most $A \cdot \frac{3}{20} \cdot \frac {u'+3}{u'} \cdot \frac {11}{10} < 0.17 A$ ($u'=\lceil (3s/20)/(yr) \rceil \ge 450$). Hence, the total number of vertices in $\mathcal{Q}_1$ is at most $0.17 A (1+\eps) < 0.18 A$. This time we get $Q_i \le 4 \cdot 0.18 A = 0.72 A$, and the desired contradiction occurs. 

Finally, let us note that \textbf{Error 1} cannot occur because the Euclidean distance between $b_1$ and $b$ is larger than $r/3$ (provided that the distances between $b_1$ and $b$ as well as between $b$ and $b_2$ are at least $r/20$). Since we consider the smallest $i$ for which such error occurred, the length of the intersection of $A_i$ with $\mathcal{Q}$ is at most $r/3+20(yr)$ and so the Euclidean distance between $b_1$ and $b$ is at most $r/3+20(yr)-r/20 < r/3$. The same argument shows that the Euclidean distance between $b$ and $b_2$ cannot be larger than $r/3$.
\end{proof}

\section{Concluding remarks}
 The proof of the lower bound can be easily generalized to show that for any fixed dimension $d$ and sufficiently large radius $r$, $a_t(G)=\Omega(n/(r \lg r)^d)$. For $d=1$, it is also easy to get the matching upper bound $a_t(G)=O(n/(r \lg r))$. It is natural to conjecture that for $d \ge 3$ the proof of the upper bound can also be adapted to show $a_t(G)=O(n/(r \lg r)^d)$, but in order not to make the paper too technical, we opted for not pursuing further this approach.

\end{document}